\newtheorem{thm}{Theorem}[section]
\newtheorem{cor}[thm]{Corollary}
\theoremstyle{remark}
\journal{Elsevier}
\begin{document}

\begin{frontmatter}



\title{SPFNO: Spectral operator learning for PDEs with Dirichlet and Neumann boundary conditions}

\author{Ziyuan Liu\fnref{label1}}{\ead{liuziyuan17@nudt.edu.cn}}
\author{Yuhang Wu\fnref{label1}}{\ead{wuyuhang18@nudt.edu.cn}}
\author{Daniel Zhengyu Huang\fnref{label2}}{\ead{huangdz@bicmr.pku.edu.cn}}
\author{Hong Zhang\fnref{label1}}{\ead{zhanghnudt@163.com}}
\author{Xu Qian\fnref{label1}\corref{cor1}}{\ead{qianxu@nudt.edu.cn}}
\author{Songhe Song\fnref{label1}\fnref{label3}}{\ead{shsong@nudt.edu.cn}}

\cortext[cor1]{Corresponding author.}
\affiliation[label1]{organization={Department of Mathematics, National University of Defense Technology},
            city={Changsha},
            postcode={410073}, 
            country={China}}
\affiliation[label2]{organization={Beijing International Center for Mathematical Research, Peking University},
            city={Beijing},
            postcode={100871}, 
            country={China}}
\affiliation[label3]{organization={State Key Laboratory of High Performance Computing, National University of Defense Technology},
            city={Changsha},
            postcode={410073}, 
            country={China}}

\begin{abstract}
  Neural operators have been validated as promising deep surrogate models for solving partial differential equations (PDEs). Despite the critical role of boundary conditions in PDEs, however, only a
  limited number of neural operators robustly enforce these conditions. In this paper we introduce semi-periodic Fourier neural operator (SPFNO), a novel spectral operator learning method for solving PDEs with
  non-periodic BCs. This method extends our previous work (arXiv:2206.12698), which showed significant improvements by employing enhanced neural operators that precisely satisfy the boundary
  conditions. However, the previous work is associated with Gaussian grids, restricting comprehensive comparisons across most public datasets. In response to this, we present numerical results of
  SPFNO for various PDEs
  such as the viscous Burgers' equation, Darcy flow, incompressible pipe flow, and coupled reaction-diffusion equations. These results demonstrate the computational efficiency, resolution invariant property,
  and BC-satisfaction behavior of proposed model. An accuracy improvement of approximately 1.7X–-4.7X over the non-BC-satisfying baselines is also achieved. Furthermore, our studies on SOL underscore
  the significance of satisfying BCs as a criterion for deep surrogate models of PDEs.
\end{abstract}


\begin{highlights}
\item Precisely satisfies the boundary conditions.
\item State-of-the-art (SOTA) performance on 5 publicly available PDE datasets.
\item $O(N log N)$ time complexity and discretization invariance.
\item Flexibility to various tasks and general geometries.
\end{highlights}

\begin{keyword}
neural operator \sep deep learning-based PDE solver \sep AI4science \sep scientific machine learning \sep spectral method



\end{keyword}

\end{frontmatter}



\section{Introduction}
Partial Differential Equations (PDEs) play a pivotal role in various scientific and engineering fields, modeling phenomena such as heat conduction, fluid flow, electromagnetic waves, and quantum
mechanics. Given that a substantial majority of PDEs lack analytical solutions, numerical methods, such as spectral methods and finite difference methods, are the primary means of numerically solving
PDEs. Recently, researchers have discovered that deep-learning methods can serve as more convenient and efficient alternatives to these traditional methods. At present, two primary deep learning methodologies are employed for
solving PDEs, including directly approximating of the solution using neural networks~\cite{han2018solving,sirignano2018dgm,he2019mgnet,zhu2018bayesian,khoo2021solving,jin2020sympnets,lyu2022mim}, e.g., deep Ritz methods \cite{yu2018deep,weinan2019barron,duan2021convergence}
, physics informed neural networks (PINNs, \cite{raissi2019physics,wang2023solving,mao2020physics,gao2023failure,shin2020convergence,sun2020physics,jagtap2020adaptive,guo2022monte,jiao2023gas}), and extreme learning machine \cite{dong2022computing}; or approximating the nonlinear
operator between the input and output functions \cite{lu2019deeponet,gin2021deepgreen,fan2020solving,cai2021deepm,jin2022mionet,nelsen2021random,li2021fourier,li2020multipole,raonic2023convolutional,bartolucci2023representation}, which is known as operator learning  and is the focus of this paper.

The boundary condition (BC) of PDE plays a crucial role in defining the behavior of the system. Due to the significant physical information contained in BCs, there has been an emerging interest in
accurately enforcing these conditions when directly learning the solution using neural networks \cite{lu2021physics,horie2022physics,sukumar2022exact,SamuelJMLR}. Due to the complexity and architectural differences involved in learning a nonlinear operator, the direct generalization of these methods to neural operators is non-trivial.

\begin{figure}[h!]
  \centerline{\includegraphics[width=1\textwidth]{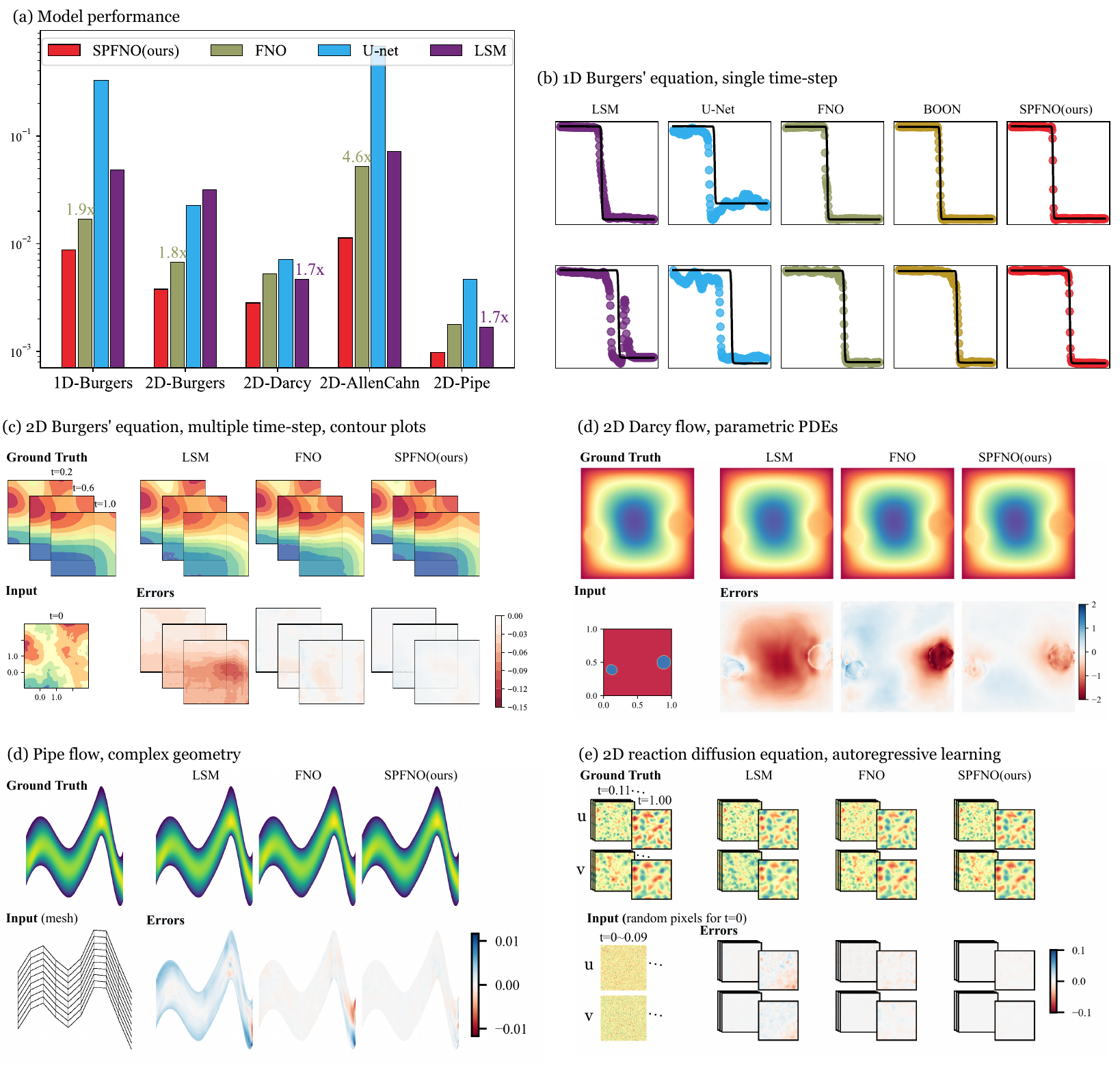}}
  \caption{(a) Comparisons of the accuracy between SPFNO and non-BC-satisfying baseline models. The lower, the better. (b)--(e) Examples of numerical experiments pertaining to various PDE learning tasks.}\label{fig:all}
\end{figure}

Fourier neural operator (FNO, \cite{li2021fourier,kovachki2021neural}) is a popular neural operator with applications in various fields \cite{pathak2022fourcastnet,zhang2022fourier,Grady2022ModelParallelFN,wen2022u,zhou2023ai}, followed
by many derivative neural operators that adopted a similar spectral analysis backbone and replaced the Fourier transform. While the treatment of BCs is also the most crucial issue for spectral
methods, it is hard for the majority of these neural
operators to accurately enforce the most commonly used BCs, namely the Dirichlet, Neumann, and Robin BCs (also known as the first-, second-, and third-type
BCs, respectively). By generalizing the spectral backbone of FNO using the technique of spectral methods with a suitable basis, we have introduced a general framework named spectral
operator learning (SOL) in \cite{liu2022render}, in which the enhanced neural operators satisfy the BCs exactly. And as the first SOL instance, the orthogonal polynomial neural operator (OPNO) showed
state-of-the-art performance on solving PDEs with non-periodic BCs. Moreover, when solving the heat transfer equation with Robin BCs, it acheived an unprecedent relative $L^2$ norm error of $1e-6$ in
the implementation of all neural operators. In addition, based on similar ideas, Bonev et al. \cite{bonev2023spherical} developed a spherical fourier neural operators (SFNO) that strictly satisfies behavioral boundary conditions
on the sphere \cite{boyd2001chebyshev}; while the Boundary enforcing Operator Network (BOON, \cite{saad2023guiding}) enforces the BCs to neural operators using a refinement procedure.

Meanwhile, there have been multiple studies on the application of the well-known
transformer models in operator learning \cite{cao2021choose,liu2022ht}, among which the recently developed latent spectral model (LSM, \cite{wu2023LSM}) ranked 1st in solving multiple PDE datasets. Thus,
the transformers are significant challengers to neural operators in solving PDEs.

Unfortunately, the fast algorithms for Chebyshev transformation in OPNO depend on the Gaussian grids, while typically, most public datasets only provide values on uniform grids, limiting the comprehensive comparison
between SOL architecture
with the non-BC-satisfying models. Introducing a novel SOL model with fast algorithms on a uniform grid and comparing it with state-of-the-art baselines across a wider range of datasets will undoubtedly
strengthen the persuasiveness of our viewpoint. To address this technical issue,  we introduce SPFNO, of which the \textbf{fast transformation algorithm} with a time complexity of $O(NlogN)$
is designed on a $N$-point uniform grid, to solve PDEs with Dirichlet and Neumann
BCs. The name SPFNO is derived from the \textbf{S}emi-\textbf{P}eriodic \textbf{F}ourier \textbf{N}eural \textbf{O}perator and also represents the \textbf{SP}ecified \textbf{F}ourier \textbf{N}eural \textbf{O}perator with non-periodic BCs. Its specified trigonometric bases allow the errors on the BCs to reach machine precision. In addition, SPFNO also possesses the following appealing properties that are expected from a neural
operator.
\begin{itemize}
\item \textbf{Invariance to discretization}. Without the need for retraining, an SPFNO model trained on a coarse grid can directly predict the solution on any fine grid. Detailed discussion on this property is conducted in Sec. \ref{sec:to-fine}.
\item \textbf{Efficient and accurate spatial differentiation}. Differentiating the output function of SPFNO requires operations of only $O(NlogN)$ using the spectral method.
\end{itemize}
Comparisons of the performance and several examples are given in Fig. \ref{fig:all}. All datasets used are already public, and the code, pre-trained model and reproducible training process are made publicly available at \href{https://github.com/liu-ziyuan-math/SPFNO}{https://github.com/liu-ziyuan-math/SPFNO}.

\section{Methodology}
\subsection{Spectral Operator Learning}\label{subsec:sol}
The SOL is a kind of specifically designed neural architecture \cite{liu2022render}, which involves learnable spectral operators that are induced by suitable basis function and implemented through correspoding
transformation. Simultaneously, it emphases the concept of strictly satisfying BCs in deep surrogate models for PDEs. This architecture allows for the utilization of various spectral methods techniques in deep learning methods and leads to improved accuracy and physical validity of the neural operator models.

We now briefly introduce this architecture through the task of prediction of time-dependent PDEs. However, its application is not limited to this scenario, and other cases can be
found in the numerical experiments and Fig. \ref{fig:all}. Consider the following PDE
\begin{equation}
u_t(x, t) + \mathcal N(u(x, t)) = 0, x \in \Omega, t \in [0, T] \nonumber
\end{equation}
with inital--boundary conditions
\begin{eqnarray}
           u(x, 0) = u_0(x) , x \in \Omega, \nonumber \\
  \mathcal B(u(x, t)) = 0, x \in \partial \Omega, \label{eq:3}
\end{eqnarray}
where $\mathcal N$ is a continuous operator and $\mathcal B$ is an operator corresponding to specific BCs. The task is to learn the solution operator $\mathcal S_{\tau}$ which evolves the initial condition $u_0$ to
the solution at $\tau$, namely, $\mathcal S_{\tau}(u_0(x)) = u(x, t=\tau)$. We let $\left\{ u_k(x) \right\}_{k \in \mathbb N}$ and $\mathcal T$ be a set of basis functions satisfying the BC (\ref{eq:3}) and the linear transform
induced by such a basis, respectively. Then the SOL model for $\mathcal S(\tau)$ consists of a stack of neural spectral layers
$\mathcal L^{(l)}$ that are induced by $\left\{ u_k \right\}$ and  $\mathcal T$, and
are in the form of
\begin{equation}
  u^{(l+1)} = \sigma(W_l u^{(l)} + \mathcal L^{(l)} u^{(l)}) := \sigma(W_l u^{(l)} + \mathcal T^{-1} A_l \mathcal T u^{(l)}), \label{eq:neural-operator}
\end{equation}
where $\sigma$ is the nonlinear activation function, $W_l$ is an auxiliary pointwise shallow neural network, and $A_l$ is a learnable spectrum-wise matrix.

The form of architecture \eqref{eq:neural-operator} is first demonstrated by Li et al. for FNO \cite{li2021fourier}, and then adopted by multiple neural operators, such as the multiwavelet-based neural operator (MWT-NO, \cite{gupta2021multiwavelet}),
wavelet neural operator (WNO, \cite{tripura2022wavelet}), spectral neural operator (SNO, \cite{fanaskov2022spectral}), orthogonal polynomial neural operator
(OPNO, \cite{liu2022render}), multi-channel IAE-Net \cite{ong2022iae} and so on. Some Examples are listed in Tab. \ref{tab:nos}. While matching the BCs by selecting a suitable underlying basis is a fundamental criterion in
spectral methods, this criterion was first introduced in
\cite{liu2022render} to operator learning, and we specifically refer to the those approach that inherently satisfies the BCs of the problem as spectral operator learning. So when solving PDEs with periodic BCs, FNO also falls into the category of SOL.

\begin{table}[h!]
\captionsetup{skip=1pt}
\caption{The fundamental spectral bases, the specified BCs and the types of utilized grids of neural operators that are based on spectral analysis. Only a subset of models are listed. When the FNO
  model is applied to periodic BCs, it also fulfills  the definition of the SOL architecture.}\label{tab:nos}
\begin{center}
\scriptsize
\begin{tabular}{|c|cccc|ccc|}
\hline
\multirow{2}{*}{Model} & \multicolumn{1}{c|}{\multirow{2}{*}{MWT-NO}} & \multicolumn{1}{c|}{\multirow{2}{*}{WNO}} & \multicolumn{1}{c|}{\multirow{2}{*}{IAE-Net}} & \multirow{2}{*}{FNO}                                            & \multicolumn{3}{c|}{\textbf{SOL models}}                                                                                                                                                                                                                      \\ \cline{6-8} 
                       & \multicolumn{1}{c|}{}                        & \multicolumn{1}{c|}{}                     & \multicolumn{1}{c|}{}                         &                                                                 & \multicolumn{1}{c|}{SFNO}                                                            & \multicolumn{1}{c|}{SPFNO}                                                                       & OPNO                                                                \\ \hline
Basis                  & \multicolumn{1}{c|}{multiwavelets}           & \multicolumn{1}{c|}{wavelets}             & \multicolumn{1}{c|}{integral}                 & Fourier                                                         & \multicolumn{1}{c|}{\begin{tabular}[c]{@{}c@{}}spherical\\ harmonics\end{tabular}}   & \multicolumn{1}{c|}{Shen-poly}                                                                   & trig-poly                                                           \\ \hline
BCs                    & \multicolumn{3}{c|}{$--$}                                                                                                                  & \begin{tabular}[c]{@{}c@{}}periodic\\ (implicitly)\end{tabular} & \multicolumn{1}{c|}{sphere}                                                          & \multicolumn{1}{c|}{\begin{tabular}[c]{@{}c@{}}Dirichlet\\ Neumann\\ Robin\\ mixed\end{tabular}} & \begin{tabular}[c]{@{}c@{}}Dirichlet\\ Neumann\\ mixed\end{tabular} \\ \hline
Grids                  & \multicolumn{4}{c|}{uniform}                                                                                                                                                                               & \multicolumn{1}{c|}{\begin{tabular}[c]{@{}c@{}}spherical\\ coordinates\end{tabular}} & \multicolumn{1}{c|}{Gaussian}                                                                    & uniform                                                             \\ \hline
\end{tabular}
\end{center}
\end{table}

\subsection{SPFNO}
\begin{figure}[h!]
  \centerline{\includegraphics[width=0.9\textwidth]{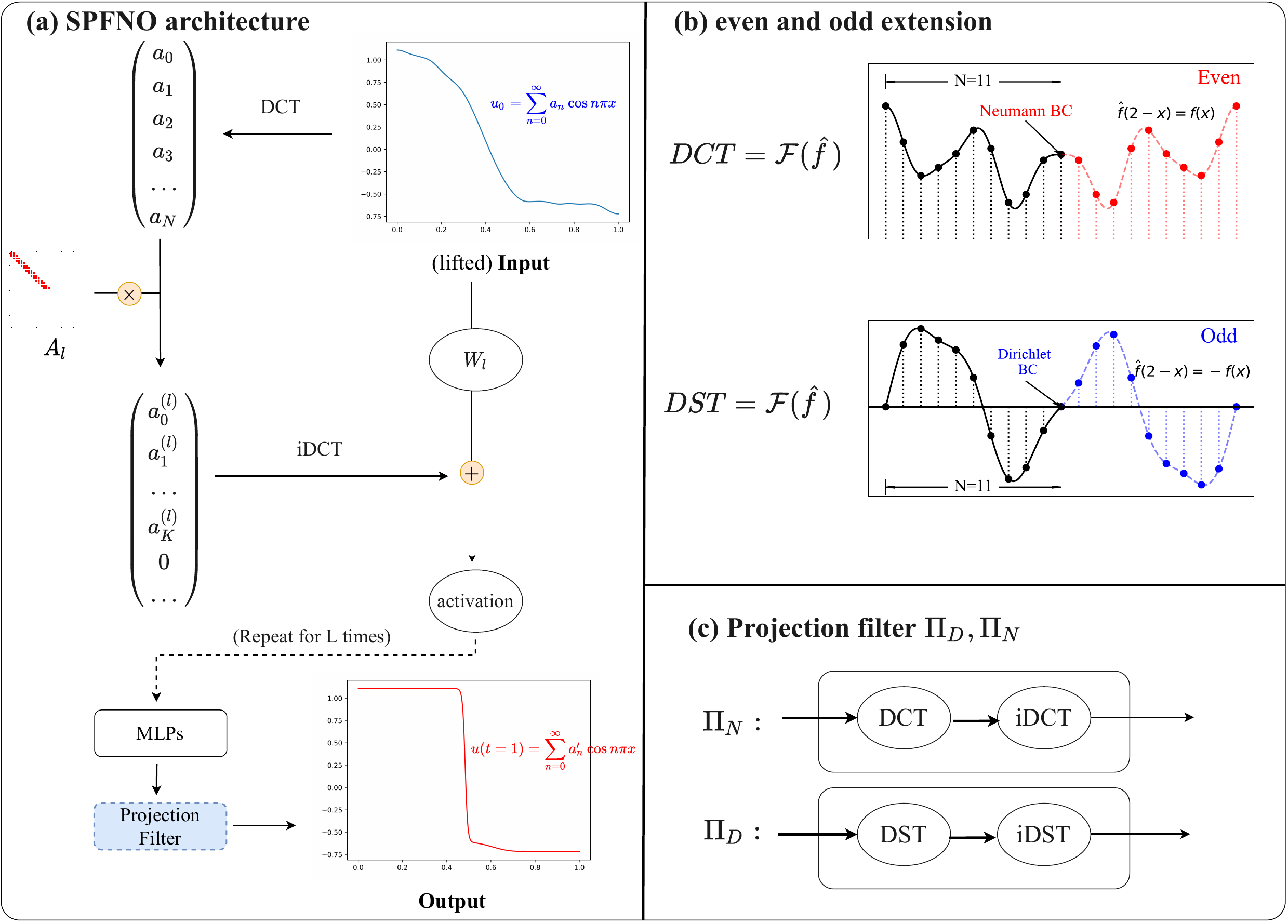}}
  \caption{(a) \textbf{Sketch map of SPFNO}, where $A_l$ is a learnable $b$-diagonal matrix with a truncation of $k_{max}$, where the $b$ represents the bandwidth. (b) \textbf{Diagram of odd/even extensions}. The Fourier expansion of odd/even extensions
    only contain the sine or cosine components, respectively. (c) The sketch map of the optional projection filter.} \label{fig:SPFNO}
\end{figure}
In this subsection, the domain $\Omega$ is limited to one dimensional interval $[0, 1]$ for convenience, while the conclusions can be easily generalized to separable multi-dimensional
domains.
When focusing on the cases of Dirichlet or Neumann BCs, the basis of specified trigonometric polynomials with ``semi-period'' is a competitive choice due to its fast transformation algorithms. More
concretely, for Dirichlet BCs, it represents the Fourier sine polynomials $u_k(x) = \sin k\pi
x, x \in [0, 1]$ and (discrete) sine transform; and the basis of cosine polynomials $u_k(x) = \cos k \pi x, x \in [0, 1]$ and (discrete) cosine transform for Neumann BCs.

The spectral methods of the aforementioned specified trigonometric polynomials have been investigated in \cite{bueno2014fourier,wise2021pseudospectral}. The SPFNO model is a natural derivative of these
methods under the framework of SOL, with $\mathcal T$ substituted by the sine or cosine transform.

The discrete sine/cosine transforms (DST/DCT) of the 1st kind involve taking the discrete Fourier transform on the odd/even extensions of the given input, respectively. With a slight abuse of notation, we denote $\hat f$ as the odd extension of function $f \in C[0, 1]$ if $f$ satisfies Dirichlet BCs or the even extension if
$f$ satisfies Neumann BCs:
\begin{equation}
\begin{split}
  \text{Odd:} \ \hat f(x) = \left\{\begin{aligned}
                     &f(x), \ x \in \left[ 0, 1 \right],\\
                     &-f(2-x), \ x \in \left[ 1, 2 \right];
                                   \end{aligned}\right. \quad
\text{Even:} \ \hat f(x) = \left\{\begin{aligned}
                     &f(x), \ x \in \left[ 0, 1 \right],\\
                     &f(2-x), \ x \in \left[ 1, 2 \right].
                                  \end{aligned}\right.
\end{split}\nonumber
\end{equation}
So the solutions are discretized at uniform grids and BCs are imposed the on boundary points, see Fig \ref{fig:SPFNO}b. Then, the sine or cosine polynomials form a basis for $\hat f$.
\begin{thm}\label{thm:1}
Suppose $f \in C[0, 1]$. The extended function $\hat f$ can be uniquely deconstructed by cosine polynomials $\left\{ \cos k \pi x \right\}_{k \in N}$ if and only if $f$ satisfies the Neumann BCs and can be
uniquely deconstructed by sine polynomials $\left\{ \sin k \pi x \right\}_{k \in N^+}$ if and only if $f$ satisfies the Dirichlet BCs.
\end{thm}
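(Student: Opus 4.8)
The plan is to treat the two equivalences in parallel, exploiting in each case the parity of the extension $\hat f$ together with completeness and orthogonality of the trigonometric system on $[0,1]$. First I would record the two structural facts that drive everything. On the one hand, $\int_0^1 \cos k\pi x\,\cos m\pi x\,dx$ and the analogous sine integral vanish for $k\neq m$, so the families $\{\cos k\pi x\}$ and $\{\sin k\pi x\}$ are orthogonal and (by classical Fourier theory) complete; this already settles uniqueness, since if two series in the same family represented $\hat f$ their difference would be a trigonometric series summing to zero, forcing the coefficients to coincide. On the other hand, every basis function carries a built-in boundary property: $\sin k\pi x$ vanishes at $x=0$ and $x=1$, while $\tfrac{d}{dx}\cos k\pi x=-k\pi\sin k\pi x$ vanishes at $x=0$ and $x=1$. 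These two observations are the engine of both directions.

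For the implication ``expansion $\Rightarrow$ BC'' I would simply evaluate at the boundary. In the Dirichlet/sine case, assuming $f=\sum_k b_k\sin k\pi x$ converges uniformly, each summand vanishes at $0$ and $1$, so $f(0)=f(1)=0$. In the Neumann/cosine case the argument is one derivative up: writing $f=\sum_k a_k\cos k\pi x$ and differentiating term by term gives $f'=-\sum_k a_k k\pi\sin k\pi x$, and evaluating at the endpoints yields $f'(0)=f'(1)=0$.

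For the converse ``BC $\Rightarrow$ expansion'' I would pass through the $2$-periodic extension. When $f(0)=f(1)=0$, the odd extension $\hat f$ is continuous and $2$-periodic; being odd, a short substitution shows its cosine coefficients all vanish, leaving a pure sine series that (for piecewise-smooth $f$) converges uniformly back to $\hat f$, with the unique coefficients $b_k=2\int_0^1 f\sin k\pi x\,dx$. When $f'(0)=f'(1)=0$, the even extension is not merely continuous but $C^1$ and $2$-periodic, so both $\hat f$ and $\hat f'$ possess uniformly convergent Fourier series consisting of cosines and sines respectively; this identifies $\hat f$ with its cosine series and, simultaneously, legitimizes the term-by-term differentiation invoked in the previous paragraph.

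The step I expect to be the main obstacle is making the cosine/Neumann equivalence as tight as the sine/Dirichlet one. The even extension of \emph{any} continuous $f$ is automatically continuous and $2$-periodic, so mere convergence of a cosine series is not enough to force the Neumann condition; the condition becomes visible only in the first derivative, that is, in whether $\hat f$ is $C^1$ across the reflection points $x=0,1$. Consequently the delicate point is to fix the precise sense in which $\hat f$ is ``deconstructed'' (convergence together with term-by-term differentiability, equivalently $\hat f\in C^1$ on the circle) so that the Neumann boundary condition is genuinely both necessary and sufficient. The sine case needs no such care, because there the Dirichlet condition is already equivalent to plain continuity of the odd extension, and I would emphasize this asymmetry when writing the details.
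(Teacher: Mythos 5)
Your Dirichlet argument and both converse directions essentially coincide with the paper's: evaluate the sine series at the endpoints for the forward implication, and for the converses extend $f$ by odd/even reflection, invoke uniform convergence of the Fourier series of the continuous $2$-periodic extension, and kill the wrong-parity half of the coefficients (the paper does this by observing that $\hat f - \sum b_n\sin n\pi x = \sum a_m\cos m\pi x$ is simultaneously odd and even, hence zero; your direct computation of the coefficients by a parity substitution is equivalent). The genuine divergence is in the forward Neumann implication. You differentiate the cosine series term by term, which, as you yourself flag, is not justified by mere convergence of the series and forces you to either strengthen the meaning of ``deconstructed'' to include $C^1$ regularity of $\hat f$ on the circle or to assume $f\in C^1$. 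The paper avoids this entirely: it assumes only that $f$ is differentiable at $x=0$ and $x=1$ (an assumption on $f$, stated at the start of its proof, not on the series), and then exploits the symmetry of the extension rather than the expansion. Since $\hat f$ is even about $x=1$, the symmetric difference quotient $\bigl(\hat f(1+h)-\hat f(1-h)\bigr)/(2h)$ is identically zero, and since the existence of the cosine expansion lets one extend $\hat f$ to a $2$-periodic $\tilde f$ that is even about $x=0$, the same cancellation gives $f'(0)=0$. Because the ordinary derivative exists at the endpoints by hypothesis, it must equal this vanishing symmetric limit. This buys necessity of the Neumann condition with no term-by-term differentiation and no extra smoothness of the series, at the modest price of positing endpoint differentiability of $f$ up front. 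Your observation that the cosine/Neumann direction is the asymmetric, delicate half of the theorem is exactly right; the paper's symmetric-quotient trick is the cleaner way to close it, and you should replace your term-by-term differentiation step with it (or explicitly add the endpoint-differentiability hypothesis and the symmetry argument) to make that half airtight.
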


The proof of Theorem \ref{thm:1} will be presented in Section 3.2. Theorem \ref{thm:1} ensures the effectiveness of DST and DCT as decomposition transforms in solving PDEs with corresponding specific BCs. It also leads to the following conclusion:
\begin{cor}
The outputs of the SPFNO model strictly satisfy the Dirichlet and Neumann BCs, respectively. 
\end{cor}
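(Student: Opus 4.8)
The plan is to run an induction over the stack of spectral layers \eqref{eq:neural-operator}, using Theorem~\ref{thm:1} to identify ``satisfying the boundary condition'' with ``lying in the span of the corresponding trigonometric basis,'' and then to verify that each operation appearing in a layer preserves this membership. First I would record the elementary fact that drives Theorem~\ref{thm:1}: every sine mode $\sin k\pi x$ vanishes at $x=0$ and $x=1$, so any sine series meets the homogeneous Dirichlet condition; and every cosine mode $\cos k\pi x$ has derivative $-k\pi\sin k\pi x$ vanishing at $x=0$ and $x=1$, so any cosine series meets the homogeneous Neumann condition. Equivalently, these are precisely the functions whose odd, respectively even, extensions $\hat f$ are compatible across the boundary, which is the characterization supplied by Theorem~\ref{thm:1}.

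Next I would set up the induction on the layer index $l$ with the hypothesis that $u^{(l)}$ satisfies the prescribed boundary condition. The spectral contribution $\mathcal T^{-1}A_l\mathcal T u^{(l)}$ is by construction a truncated sine, respectively cosine, series, so it satisfies the condition for every choice of $A_l$. The two remaining terms are the pointwise residual $W_l u^{(l)}$ and the outer activation $\sigma$, and the most transparent way to control them is through the parity of the periodic extension. In the Neumann case this is immediate: evenness about both $x=0$ and $x=1$ survives any pointwise map, since $\sigma\bigl(v(2-x)\bigr)=\sigma\bigl(v(x)\bigr)$ whenever $v(2-x)=v(x)$, and an affine channel mixing $W_l v+b$ keeps the same symmetry because an additive constant is itself even. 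Hence $u^{(l+1)}$ is again a cosine series and the Neumann condition propagates with no restriction on $\sigma$.

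The Dirichlet case is where I expect the real obstacle, because odd symmetry is fragile: $\sigma\bigl(v(-x)\bigr)=\sigma\bigl(-v(x)\bigr)$ coincides with $-\sigma\bigl(v(x)\bigr)$ only when $\sigma$ is odd, and a pointwise bias likewise destroys oddness. Rather than impose an odd activation, I would fall back on the weaker---yet sufficient---requirement that only the boundary \emph{values} need to vanish. Since $u^{(l)}(0)=u^{(l)}(1)=0$ by hypothesis and the spectral term also vanishes there, evaluating the layer at a boundary point gives $u^{(l+1)}(0)=\sigma\bigl(W_l\,u^{(l)}(0)\bigr)=\sigma(0)$, which is zero as soon as $\sigma(0)=0$ and the channel mixing $W_l$ carries no additive bias, so that $W_l\cdot 0=0$. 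These are mild, standard conditions, and I would state them explicitly as hypotheses on the network; the absence of a bias obstruction in the Neumann case accounts for the asymmetry between the two settings.

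Finally I would close the induction. The base case holds because the network input already satisfies the boundary condition and the lifting is pointwise---for Neumann this is automatic, and for Dirichlet it is secured by the no-bias convention or by the optional projection filter of Fig.~\ref{fig:SPFNO}(c)---after which the inductive step carries the condition through every layer to the final output, establishing the corollary for the Dirichlet and Neumann settings simultaneously. The only step demanding genuine care is the Dirichlet nonlinearity argument above; every other ingredient reduces to the boundary evaluation of trigonometric series already packaged in Theorem~\ref{thm:1}.
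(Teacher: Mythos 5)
Your layer-by-layer induction is not the paper's argument, and it contains a genuine gap that the paper's (much shorter) argument avoids. The corollary follows directly from the easy direction of Theorem~\ref{thm:1}: the output of SPFNO is produced by a final synthesis $\mathcal T^{-1}(\cdot)$ from truncated spectral coefficients (equivalently, by the projection filter of Fig.~\ref{fig:SPFNO}c), hence it is a \emph{finite} linear combination of the modes $\sin k\pi x$ (resp.\ $\cos k\pi x$), and every such combination vanishes (resp.\ has vanishing derivative) at the endpoints. Nothing needs to be assumed, or proved, about the intermediate states $u^{(l)}$ --- and indeed nothing true can be: each layer \eqref{eq:neural-operator} ends with a biased pointwise map $W_l$ and an activation $\sigma$ applied \emph{after} the spectral reconstruction, so the intermediate states do not satisfy the BCs and are not meant to.

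The concrete failures in your induction are the base case and the Dirichlet step. For the base case, the operator's input is not the solution and need not satisfy the output's BCs at all: in the Darcy experiment the input is the (piecewise-constant) diffusion coefficient $a$, in the pipe experiment it is the mesh coordinates, and the pointwise lifting carries biases and appended positional channels; so $u^{(0)}$ fails the induction hypothesis in both the Dirichlet and the Neumann settings, not only the Dirichlet one. For the inductive step, you correctly identify that oddness is destroyed by $\sigma$ and by biases, but your remedy --- requiring $\sigma(0)=0$ and bias-free $W_l$ --- imposes hypotheses that the paper does not make and that the implementation does not satisfy. Both difficulties vanish once the burden of BC enforcement is placed where the architecture places it: on the last operation being a pure reconstruction in the BC-satisfying basis. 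Your parenthetical appeal to the projection filter in the base case is, in effect, the entire proof; the nontrivial (completeness) direction of Theorem~\ref{thm:1} is needed only to argue that restricting the output to the span of that basis loses no expressivity.
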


Finally, the structure of the
SPFNO is given in Fig. \ref{fig:SPFNO}. The projection filter illustrated in Fig. \ref{fig:SPFNO}c is an optional component that ultimately eliminates the tiny errors of output with respect to the BCs. Generally, its influence on
performance is minimal. For example, we simultaneously present the results of SPFNO with and without it in Sec. \ref{sec:exam5}. And it is worth noting that a FNO with a similar DFT projection
filter satisfies the periodic BCs exactly.

\subsection{Proof for Theorem \ref{thm:1}}\label{app:proof-thm-1}
Since $f$ is typically a strong solution to PDEs in the implementation of SPFNO, it is reasonable to assume that $f$ possesses sufficient smoothness, such as belonging to $f \in H^1([0, 1])$. But for now we only need $f \in C[0,
1]$, or the differentiability on the boundary additionally for the following proof of Neumann BCs.
\subsubsection{Case of $f$ satisfying Dirichlet BCs and $\hat f$ being its odd extension}\label{sec:proof-dirichlet}
 First, we will prove that if the odd extension $\hat f$ can be uniquely deconstructed by sine polynomials, i.e.
\begin{equation}
  \hat f = \sum\limits_{k=1}^{\infty} b_k \sin k \pi x, \ x \in \left[ 0, 2 \right], \nonumber
\end{equation}
then $f$ satisfies the Dirichlet BCs:
$$ f(0) = f(1) = 0.$$
The proof is straightforward: substituting $x=0$ and $x=1$ give that
$$ f(0) = \hat f(0) = 0 = \hat f(1) = f(1).$$
This equality is derived from the fact that a countably infinite number of zeros sum up to zero. On the other hand, given $f \in C\left[ 0, 1 \right]$ as an arbitrary function that satisfies homogeneous Dirichlet BCs, i.e.,
\begin{equation}
\label{eq:30}
f(0) = f(1) = 0, \nonumber
\end{equation}
its odd extension is of the form
\begin{equation}
\label{eq:31}
\hat f(x) = \left\{\begin{aligned}
                     &f(x), \ x \in \left[ 0, 1 \right],\\
                     &-f(2-x), \ x \in \left[ 1, 2 \right],
\end{aligned}\right. \nonumber
\end{equation}
which means that $\hat f$ is a continuous function on a closed interval. Consequently, it can be inferred from the Weierstrass approximation theorem for trigonometric series that the Fourier
series of $\hat f$ converges to $\hat f$ uniformly, which is of the form
\begin{equation}
\label{eq:fourier-expansion}
\hat f(x) = \sum\limits_{m=0}^{\infty}  a_m \cos m\pi x + \sum\limits_{n=1}^{\infty}  b_n \sin n \pi x, \ x \in \left[ 0, 2 \right], 
\end{equation}
where
\begin{equation}\label{eq:bn}
\begin{split}
  &a_m = \int_{0}^2 \hat f(x) \cos m \pi x \ \rm{d}x , \ m \in \mathbb N, \\
  &b_n = \int_0^2 \hat f(x) \sin n \pi x \ \rm{d}x, \ n \in \mathbb N^{+}. 
\end{split}
\end{equation}
Then it yeilds that
\begin{equation}
\label{eq:lhs-rhs}
\hat f(x) - \sum\limits_{n=1}^{\infty} b_n \sin n\pi x = \sum\limits_{m=0}^{\infty} a_m \cos m\pi x , \ x \in \left[ 0, 2 \right].
\end{equation}
Noting that the left-hand side of Eq. (\ref{eq:lhs-rhs}) is an odd function while the right-hand side is even. The only possibility is that it remains zero constantly. As a result, 
\begin{equation}
\hat f(x) = \sum\limits_{n=1}^{\infty} b_n \sin n\pi x, \nonumber
\end{equation}
where $b_n$ is determined by Eq. (\ref{eq:bn}).

\subsubsection{Case of $f$ satisfying Neumann BCs and $\hat f$ being its even extension}

The proof of this part is actually analogous to Sec. \ref{sec:proof-dirichlet}. Assume that $f \in C[0, 1]$ is differentiable on $x=0$ and $1$. On the one hand, given that the even extension $\hat f$ can be uniquely deconstructed by cosine polynomials, i.e.
\begin{equation}
\label{eq:cos-extension}
  \hat f = \sum\limits_{k=0}^{\infty} a_k \cos k \pi x, \ x \in \left[ 0, 2 \right]. 
\end{equation}
Noting that $\hat f$ is an even expansion. Consequently, we have
\begin{equation}
\begin{split}
  f'(1) &= (\hat f)'(1) = \lim\limits_{h \to 0} \frac{\hat f(1+h) - \hat f(1-h)}{2 h} \\
               &= \lim\limits_{h \to 0} \frac{f(1-h) - f(1-h)}{2 h} = 0.
\end{split}\nonumber
\end{equation}
Moreover, since Eq. (\ref{eq:cos-extension}) holds, we can further extend $\hat f$ to $\mathbb R$ with a period of $2$, which is here denoted as $\tilde f$. The periodicity of $\tilde f$ yields that the
following equation holds on $x = 0$:
\begin{equation}
\begin{split}
  f'(0) &= (\tilde f)'(0) \\
        &= \lim\limits_{h \to 0} \frac{\tilde f(0+h) - \tilde f(0-h)}{2 h}
        = \lim\limits_{h \to 0} \frac{ f(0+h) - \hat f(2-h)}{2 h} \\
               &= \lim\limits_{h \to 0} \frac{f(h) - f(h)}{2 h}
                 = 0.
\end{split}\nonumber
\end{equation}
So the Neumann BCs are satisfied by $f$.

On the other hand, similar to Sec. \ref{sec:proof-dirichlet}, the Fourier series of $\hat f$ converges to $\hat f$ uniformly, so Eqns. (\ref{eq:fourier-expansion}) \& (\ref{eq:bn}) also hold for
the even extension $\hat f$ when $f$ satisfies Neumann BCs. 
Then it yields that
\begin{equation}
\hat f(x) - \sum\limits_{m=0}^{\infty} a_m \cos m\pi x =  \sum\limits_{n=1}^{\infty} b_n \sin n\pi x \equiv 0, \ x \in \left[ 0, 2 \right]. \nonumber
\end{equation}
As a result, 
\begin{equation}
\hat f(x) = \sum\limits_{m=0}^{\infty} a_m \cos m\pi x, \nonumber
\end{equation}
where $a_m$ is determined by Eq. (\ref{eq:bn}).

\begin{cor}
  The sets of specified trignometric functions $\left\{\cos k \pi x \right\}_{k \in N}$ and $\left\{\sin k \pi x \right\}_{k \in N^{+}}$  form the orthonormal bases for
  corresponding function spaces, namely, the even extension $\left\{ \hat f_{\mathrm{even}}| f \in C[0, 1], \ f'(0)=f'(1)=0 \right\}$ and odd extension $\left\{ \hat f_{\mathrm{odd}}| f \in C[0, 1], \ f(0)=f(1)=0 \right\}$, respectively.
\end{cor}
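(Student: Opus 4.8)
\subsection*{Proof proposal for the final Corollary}

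The plan is to recognize that the statement bundles together two separate facts—orthonormality and completeness—over the inner-product space $L^2[0,2]$, and that the completeness half has essentially already been established as Theorem \ref{thm:1}. First I would fix the ambient Hilbert space as $L^2[0,2]$ with $\langle g, h\rangle = \int_0^2 g\,h\,\mathrm dx$, and observe that reflection about $x=1$ splits it as an orthogonal direct sum $L^2[0,2] = E \oplus O$, where $E = \{g : g(2-x)=g(x)\}$ is the even-symmetric subspace containing every even extension $\hat f_{\mathrm{even}}$, and $O = \{g : g(2-x)=-g(x)\}$ is the odd-symmetric subspace containing every odd extension $\hat f_{\mathrm{odd}}$. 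A direct check that $\cos k\pi(2-x)=\cos k\pi x$ and $\sin k\pi(2-x)=-\sin k\pi x$ places the cosine family inside $E$ and the sine family inside $O$, so the two systems are already aligned with the claimed spaces.

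Next I would dispatch orthonormality by the standard trigonometric integrals. Computing $\int_0^2 \cos k\pi x\cos j\pi x\,\mathrm dx$ and $\int_0^2 \sin k\pi x\sin j\pi x\,\mathrm dx$ via product-to-sum identities gives $\delta_{kj}$ for $k,j\ge 1$, together with $\|1\|_{L^2[0,2]}^2 = 2$ for the constant mode $\cos 0\pi x$; I would flag this single normalization so that the orthonormal cosine system is read as $\{\tfrac{1}{\sqrt 2},\, \cos \pi x,\, \cos 2\pi x,\dots\}$. Alongside this I would verify membership in the defining classes: the restriction of $\cos k\pi x$ to $[0,1]$ has derivative $-k\pi\sin k\pi x$ vanishing at both $x=0$ and $x=1$, hence satisfies the Neumann conditions and has $\cos k\pi x$ on $[0,2]$ as its even extension; symmetrically, $\sin k\pi x$ vanishes at $0$ and $1$, satisfies the Dirichlet conditions, and has the sine extension as its odd extension.

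Completeness is then supplied directly by Theorem \ref{thm:1}: for any $f\in C[0,1]$ with $f'(0)=f'(1)=0$, that theorem produces the uniformly convergent—and hence $L^2$-convergent—unique expansion $\hat f_{\mathrm{even}} = \sum_{k\ge0} a_k\cos k\pi x$, and analogously $\hat f_{\mathrm{odd}} = \sum_{k\ge1} b_k\sin k\pi x$ in the Dirichlet case. This shows each orthonormal system spans the defining class with unique coefficients, which is exactly what ``basis'' requires.

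The hard part is conceptual rather than computational: the sets $\{\hat f_{\mathrm{even}}\}$ and $\{\hat f_{\mathrm{odd}}\}$ as written are not closed subspaces of $L^2$, so ``orthonormal basis for this space'' must be read carefully. I would resolve this by interpreting the claim as completeness within the closures $\overline{\{\hat f_{\mathrm{even}}\}} = E$ and $\overline{\{\hat f_{\mathrm{odd}}\}} = O$: the cosine (resp. sine) system is orthonormal and, by the splitting $L^2[0,2]=E\oplus O$ together with Parseval for the full Fourier basis on $[0,2]$, complete in $E$ (resp. $O$); moreover the $C^1$ Neumann (resp. Dirichlet) functions are $L^2$-dense in $E$ (resp. $O$), since finite cosine (resp. sine) sums already belong to the class. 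This density, and the bookkeeping of the lone $k=0$ normalization factor, are the only points that genuinely need care.
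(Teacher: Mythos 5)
Your proposal is correct and its core is the same as the paper's: orthonormality is dispatched by the product-to-sum integrals over $[0,2]$, and completeness is imported wholesale from Theorem \ref{thm:1}. Where you differ is in the scaffolding you add around that core, and each addition is a genuine improvement in rigor rather than a different proof idea. First, your explicit splitting $L^2[0,2]=E\oplus O$ into reflection-symmetric and antisymmetric subspaces, together with the check that the cosines lie in $E$ and the sines in $O$, makes precise \emph{which} Hilbert spaces the two systems are bases of; the paper leaves the ambient space implicit. Second, your handling of the constant mode is actually a correction: the paper's displayed identity $\int_0^{2}\cos k\pi x\cos m\pi x\,\mathrm dx=\delta_{k,m}$ fails at $k=m=0$, where the integral equals $2$, so the cosine system as written is orthogonal but not orthonormal until the constant is rescaled to $1/\sqrt 2$ — the paper gestures at this with the phrase ``normalized bases'' but never writes it down. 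Third, your observation that the sets $\{\hat f_{\mathrm{even}}\}$ and $\{\hat f_{\mathrm{odd}}\}$ are not closed in $L^2$, and your resolution via density of finite trigonometric sums in $E$ and $O$, addresses a real imprecision in the statement that the paper's one-line proof simply ignores. None of this changes the mathematical content, but if you were writing this up you could fold the $k=0$ normalization and the closure caveat into the statement of the corollary itself; the $E\oplus O$ decomposition could be omitted without loss, since Theorem \ref{thm:1} already supplies completeness on the relevant dense class.
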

\begin{proof}
As Theorem \ref{thm:1} has shown that the specified trignometic functions form a basis for corresponding function space, all we need to do is to prove is the orthonormality of the normalized bases. A simple calculation yields
\begin{equation}
\begin{split}
&\int_0^{2} \sin k\pi x \sin m\pi x = -\frac{1}{2} \int_0^{2} \cos\pi (k+m)x - \cos\pi(k-m) x =  \delta_{k, m} , \\
&\int_0^{2} \cos k\pi x \cos m\pi x = \frac{1}{2} \int_0^{2} \cos\pi (k+m)x + \cos\pi (k-m) x =  \delta_{k, m}. \\
&\int_0^{2} \sin k\pi x \cos m\pi x = \frac{1}{2} \int_0^{2} \sin\pi (k+m)x + \sin\pi (k-m) x = 0.
\end{split}\nonumber
\end{equation}
\end{proof}

\section{Numerical experiments}
In order to verify the accuracy and efficiency of the SOL architecture and the importance of
BC-satisfying property for neural operators, we compare SPFNO with multiple popular but non-BC-satisfying architectures by solving the following four PDEs on publicly available datasets: (1) 1D and
(2) 2D Burgers' equations with
Neumann BCs; (3) Darcy flow problem with Dirichlet BCs; (4) the coupled reaction diffusion equations with Neumann BCs; and (5) 2D incompressible flow through a pipe. These five questions comprehensively
address the common tasks for operator learning that we are primarily concerned with:
\begin{itemize}
  \item single-step predictions for time-dependent PDEs;
  \item multi-step predictions for time-dependent PDEs;
  \item solving parametric PDEs with the variable coeffients as input;
  \item autoregressive learning for time-dependent PDEs;
  \item learning the solution to PDEs on general geometries.
\end{itemize}

The accuracy of the models is measured by the average relative $L^2$ norm error (also known as
the relative mean square error, or RMSE) between the
predicted solution and the reference solutions and the $L^{\infty}$ norm error on the corresponding BCs. The maximum $L^2$ norm error on the test dataset, which represents the
empirical worst performance and is crucial for assessing the credibility of models, is also taken into consideration. The baseline models are listed below.
\begin{enumerate}
  \item \textbf{FNO} \cite{li2021fourier} is a state-of-the-art neural operator for parametric PDEs, especially those involving periodic BCs. FNOs have many applications and they have achieved
    impressive accuracy in practical application due to its architecture being similar to that of the spectral method. In the experiments, zero-padding has
    been applied to the input of FNO to enhance the accuracy on non-periodic BCs.
\item \textbf{OPNO} \cite{liu2022render} is the first proposed SOL method for non-periodic BCs such as Dirichlet, Neumann, and Robin BCs. It provided the first numerical example that verifies the competitive
  accuracy of deep-learning-based surrogate model to the numerical method, with the relative errors reaching the order of $10^{-6}$. Notice that the OPNO will be tested if and only if the data on
  Chebyshev-Gauss-Lobatto grids are provided by the dataset.
 \item \textbf{U-Net} \cite{ronneberger2015u} is a popular autoencoding deep learning architecture that combines the convolutional and deconvolutional layers. It has been proven to be a powerful model for tackling image segmentation
   tasks. It is also used as a baseline model in the PDEBench datasets and demonstrates considerable accuracy in specific PDE tasks. 
 \item \textbf{LSM} \cite{wu2023LSM} is a cutting-edge transformer-based neural PDE solver that consists of an autoencoding backbone and innovative neural spectral blocks. It successfully trained a
   neural network with considerable depth and outperformed the performance of 14 existing models, including neural operators, autoencoders, and transformers, across 7 PDE-solving tasks. In this paper, the LSM model also serves as the most representative non-neural-operator model for PDEs known to us.
 \end{enumerate}
 
 \begin{table}[h!]
\caption{Hyperparameters configurations for the networks of SPFNO and the training process in the following experiments. The number of modes, depth of the spectral backbone, width of the
channel domain, and bandwidth of $A_l$ are denoted by $k_{max}$, $L$, $W$, and $b$, respectively.}\label{tab:para}
\begin{center}
\scriptsize
\begin{tabular}{@{}ccccccccc@{}}
\toprule
Experiment     & epochs & $L$ & $k_{max}$ & $W$ & $b$ & batch size & weight decay  & scheduler \\ \midrule
1D-Burgers     & 5000   & 4     & 20    & 50    & 4        & 20         & 1e-4 & StepLR\\
2D-Burgers     & 3000   & 4     & 16    & 24    & 4        & 20         & 1e-4 &StepLR\\
2D React-Diff  & 500    & 4     & 24    & 24    & 1        & 5         & 1e-4 & StepLR\\
Darcy flow     & 500    & 4     & 24    & 32    & 1        & 20         & 1e-6& Plateau\\
Corrected pipe & 500    & 4     & 24    & 32    & (3, 1)   & 20    & 1e-6 & Plateau \\ \bottomrule
\end{tabular}
\end{center}
\end{table}
 
In addition, the \textbf{BOON} \cite{saad2023guiding} model is an alternative and, to the best of our knowledge, the only other type of BC-satisfication technique distinct from SOL. It applies a corrective projection that enforces the BCs on each 
spectral layer of given neural operators, instead of using a basis that inherently satisfies the BCs. It also showed that the BC-satisfying correction significantly increase the accuracy of the
neural operators. In Sec. \ref{sec:addi-exp}, we selected three representative problems from BOON datasets \cite{saad2023guiding} as supplementary experiments: (6.a) 1D Burgers' equation with Dirichlet BCs, on which the BOON
showed the highest performance improvement (30X) over its baseline; (6.b) 1D heat equation with time-dependent BCs; and (6.c) 2D wave equation with Neumann BCs. Moreover, the selected datasets precisely
covers all the cases of model dimensions considered in \cite{saad2023guiding} (1D, 1D+time, 2D+time). In addition, the currently available portion of the BOON code allows us to include it in the comparison in Experiment 1.


Except for Sec. \ref{sec:to-fine}, all models are evaluated at the same resolution as that used during the training
process. Additionally, they are trained using the Adam optimizer and the RMSE loss function, with the same weight decay parameter and number of epochs, and
the random seeds are fixed to 0 for fairness. For neural operators, considering their similar spectral structures, we choose the same number of modes (denoted by $k_{max}$), depth of the spectral backbone ($L$), width of the
channel domain ($W$), and size of mini-batch. It is very common in spectral methods to compare the accuracy of various spectral bases by controlling the number of modes. A representative example is found in Boyd's work \cite{boyd1978choice}, where the number of modes needed to discretize a given function was used as a decisive criterion for comparing spectral methods with different bases, such as Chebyshev, Fourier, and spherical harmonics. The non-neural-operator baseline are also finely tuned, and only the best performance of baselines with comparable scale in our experiment is demonstrated. All experiments are performed
on an Nvidia A100 80GB GPU.

\subsection{Experiment 1: 1D viscous Burgers' equation with Neumann BCs}\label{exp:1d-burgers}
We first consider the one-dimensional viscous Burgers equation as follows
\begin{equation}\label{eq:burgers}
\frac{\partial}{\partial t} u(x, t) + u(x, t) \frac{\partial }{\partial x} u(x, t) = \nu \frac{\partial^2 }{\partial x^2}  u(x, t) , x \in \Omega \nonumber
  \end{equation}
subject to the initial-boundary conditions
\begin{align}
  & u(x, 0) = u_0(x), \ x \in \Omega, \nonumber \\ 
  & \frac{\partial}{\partial x}  u(x, t) = 0, \ x \in \partial \Omega, \nonumber
\end{align}
where $\Omega = \left[ -1, 1 \right]$. Burgers' equation is a fundamental PDE with applications in modeling turbulence, nonlinear acoustics, and
traffic flow. The complexity of the dynamical system it describes poses challenges for the learning of deep models, so it has been adopted as one of the most popular benchmark
problems in the field of
AI4Science.

The dataset used in this experiment is sourced from the paper on the OPNO \cite{liu2022render}, where $\nu$ is fixed to $0.1/\pi$ and the task is to learn the solution operator
$S_1: u_0(x) \to u(x, t=1)$. Because the input functions in PDE training and test dataset are usually randomly sampled from the same distribution, in order to ensure the generalization of the trained
model, the generator of samples should possess the ability to approximate any function on the input space and a sufficient number of degrees of freedom. In this dataset, the initial condition $u_0(x)$ is generated by sampling from a Gaussian random field
according to $u_0 \sim \mathcal N(0, 625(-4\Delta + 25 I)^{-2})$ with Neumann BCs to maintain the complexity of the sample space.

As we have concluded in \cite{liu2022render}, the BC-satisfying property is crucial for enhancing the accuracy and credibility of surrogate models for PDEs. One can reasonably anticipate that other BC-satisfying
neural operators should also demonstrate notable superiority, even when employed on different grids. This expectation arises from the fact that neural operators are specifically designed to learn the
underlying operator, rather than the values at discrete grids of a function.

Our results first ruled out the suspicion that the advantage of SPFNO over FNO arises from a larger bandwidth rather than BC satisfaction. This is illustrated in Fig. \ref{fig:burgers-agst-bw}, which also indicates that this technique is not suitable for FNO. Therefore, the bandwidth of FNO is maintained at 1.

The results presented in
Tab. \ref{tab:exp-1-1} and Fig. \ref{fig:burgers-agst-bw} show that the SPFNO achieves a comparable average accuracy and superior worst performance when compared to the previous SOL model OPNO. On the other hand,
it significantly outperforms all non-BC-satisfying baseline models as well as the BC-satisfying BOON model. And although the FNO and LSM models have achieved acceptable global errors in some cases, their errors on the BCs can be several orders of magnitude larger.

\begin{table}[h!]
  \caption{Evaluation of (a) the relative $L^2$ norm error ($\times 10^{-2}$) and worst error ($\times 10^{-2}$), and (b) the error on the Neumann BC of 1D-Burgers’ equation with resolution of
    $N$. The best result is in bold and the second best is underlined.  The SOL models, namely the SPFNO and OPNO that achieve the SOTA performance, are listed separately.}\label{tab:exp-1-1}
  \centering
\begin{subtable}{.61\textwidth}
	\scriptsize
  \caption{Relative $L^2$ norm error ($\times 10^{-2}$) and worst error ($\times 10^{-2}$)}
  \begin{tabular}{ccccccc}
    \toprule 
  \multirow{2}{*}{Model}   & \multicolumn{2}{c}{$N=256$} & \multicolumn{2}{c}{$N=1024$} & \multicolumn{2}{c}{$N=4096$} \\
    \cmidrule(lr){2-3} \cmidrule(lr){4-5} \cmidrule(lr){6-7}
   &$L^2$ &worst &$L^2$ &worst &$L^2$ & worst \\
    \midrule
  FNO
     &$1.57$& $14.9$ &$1.68$ &$14.1$ & $1.69 $ & $16.6$ \\
  U-Net
     &$6.27 $&$ 56.9 $ &$27.6$&$ 126.3 $ &$ 33.0$&$ 146.3 $ \\
  LSM
     &$4.87 $&$ 47.2 $ &$21.1$& $108.6  $ &$38.7$&$ 167.7 $ \\
  BOON
     &$1.20$&$ 10.1 $ &$1.28$& $ 10.2$ &$1.42$&$10.0$ \\ \midrule
  OPNO
     &$\mathbf{0.770}$&$\underline{4.63}$ &$\mathbf{0.781}$&$ \underline{4.40}$ & $\mathbf{0.782} $&$\underline{3.86}$ \\
  SPFNO
    &$\underline{0.868}$&$\mathbf{3.58} $ &$\underline{0.862}$&$ \mathbf{3.55}$ & $\underline{0.873}$&$ \mathbf{3.65}$ \\
    \bottomrule
  \end{tabular}
\end{subtable}
 \begin{subtable}{.33\textwidth}\label{tab:exp-1-1-b}
	\scriptsize
  \caption{Error on the BCs}
  \begin{tabular}{ccc}
    \toprule 
   $N=256$ & $N=1024$ & $N=4096$ \\
    \cmidrule(lr){1-1} \cmidrule(lr){2-2} \cmidrule(lr){3-3}
    $L^{\infty}_{BC}$  &$L^{\infty}_{BC}$ &$L^{\infty}_{BC}$  \\
    \midrule
  $2.9E\text{-}1$ & $4.1E\text{-}1$&$5.5E\text{-}1$ \\
  $1.1E\text{-}2$ &$5.7E\text{-}2$  & $8.9E\text{-}2$ \\
  $4.8E\text{-}1$ &$5.1E\text{-}1$  & $9.8E\text{-}1$ \\
  $0$ & $0$ &$0$ \\ \midrule 
  $6.0E\text{-}12$ & $1.1E\text{-}10$ &$1.9E\text{-}9$ \\
  $0$ & $0$ &$0$ \\
    \bottomrule
  \end{tabular}
\end{subtable}
\end{table}

Moreover, it is noteworthy that the BOON model also achieves better performance than any
non-BC-satisfying models. This further supports our viewpoints on the BC-satisfication of neural operators, as it is mutually corroborated through different neural architectures.

\begin{figure}[tbhp]
  \begin{center}
    \subfloat[][relative $L^2$ norm error]{\includegraphics[width=.35\linewidth]{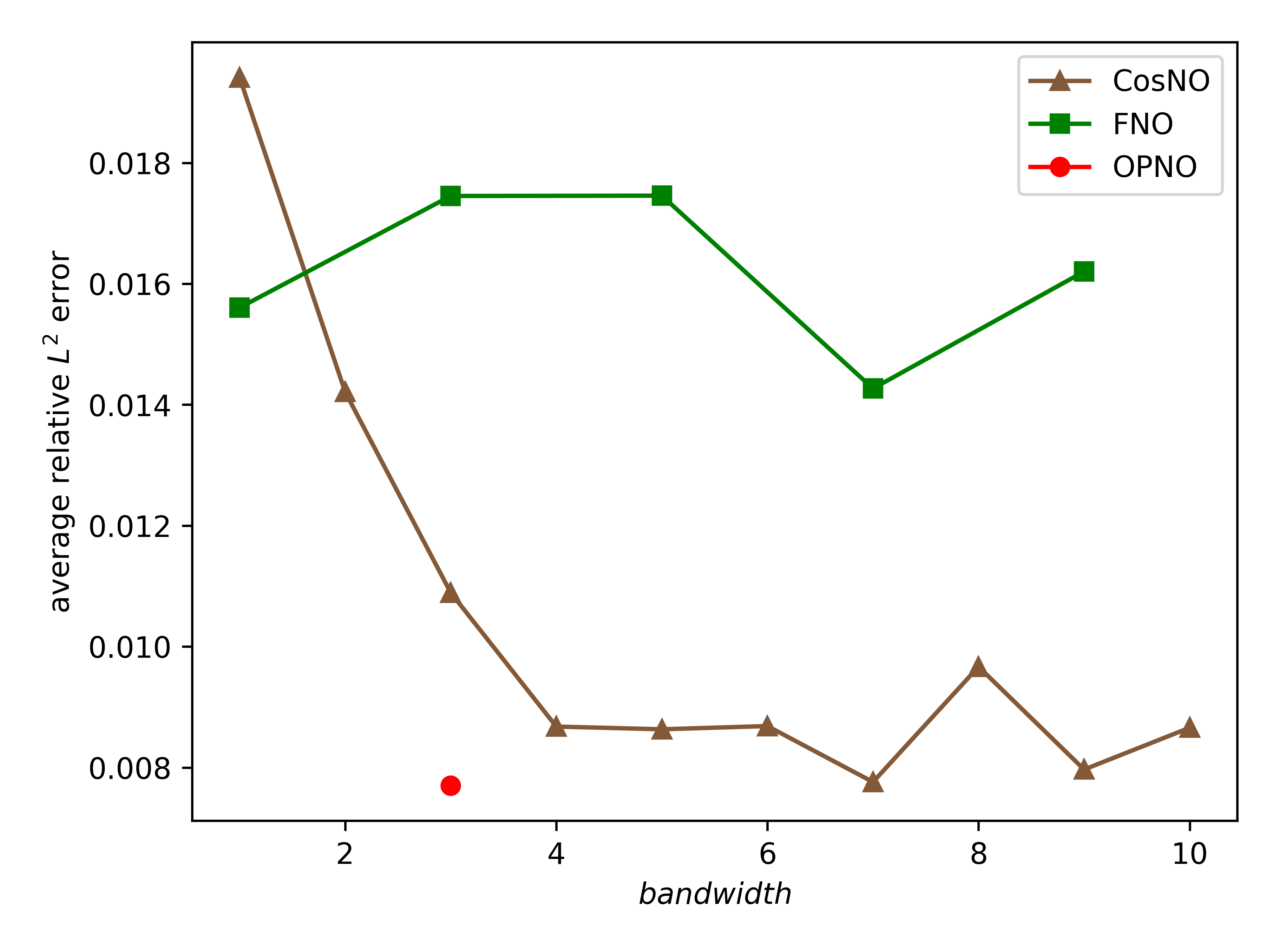}} \quad 
    \subfloat[][max error]{\includegraphics[width=.35\linewidth]{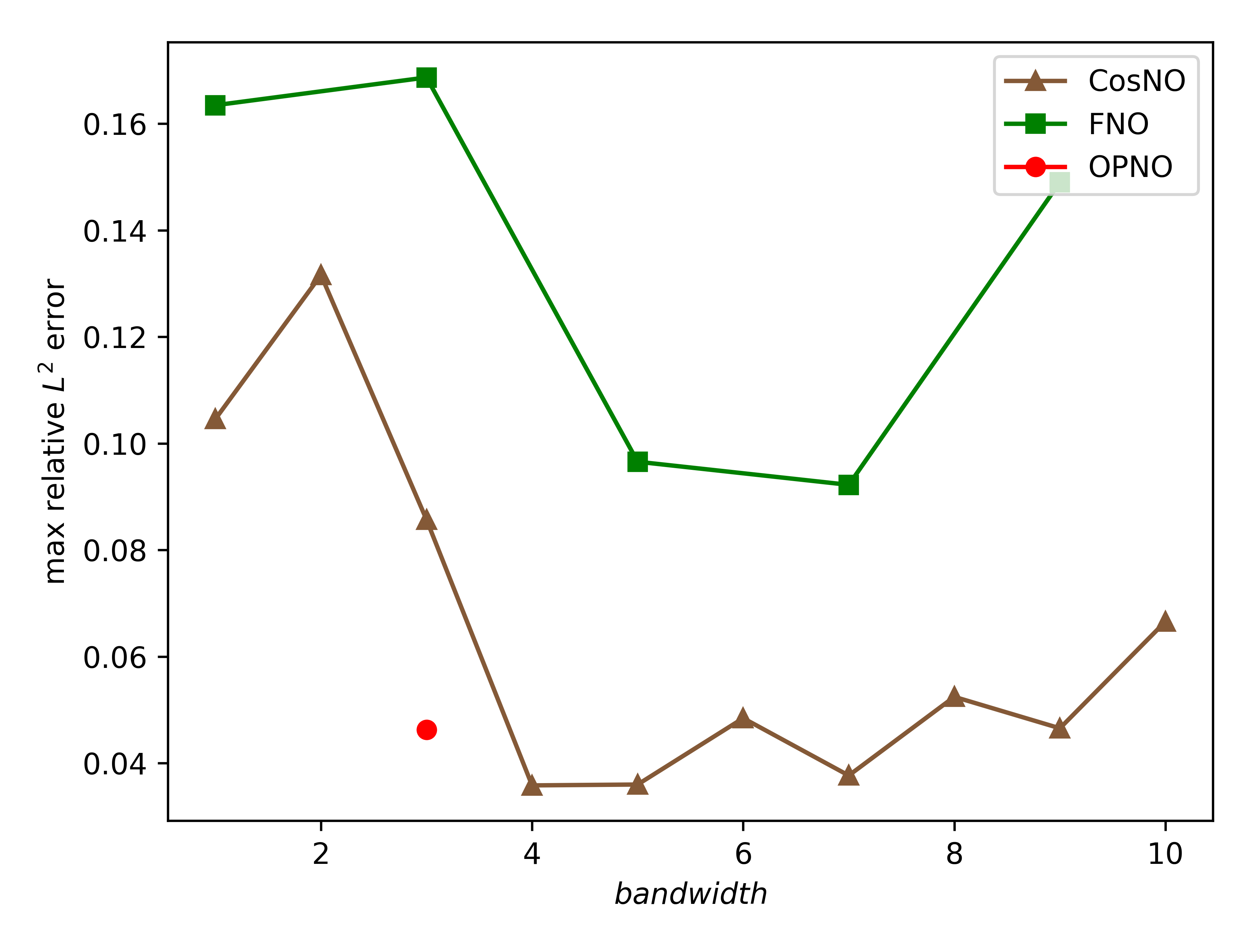}}
  \end{center}
    \caption{\textbf{Comparisons of the relative $L^2$  errors and max errors for various bandwidth $b$ on the testing dataset for 1D Burgers' equation}. While numerical experiment demonstate that the quasi-diagnolizing technique improves the performance
      of SPFNO, it cannot substitute the requirement of BC-satification property, seeing that the FNO with a $b$-diagonal learnable matrix does not result in a substantial accuracy improvement.}\label{fig:burgers-agst-bw}
\end{figure}

\subsection{Experiment 2: 2D viscous Burgers’ equation with Neumann BCs}\label{exp:2d-burgers}
With the computational domain set to $\Omega = [-1, 1]^2$, the following 2D Burgers' equation is considered in this experiment:
\begin{equation}
    \left\{\begin{aligned}
  & \frac{\partial u}{\partial t} + u \frac{\partial u}{\partial x} + u \frac{\partial u}{\partial y} = \nu (\frac{\partial^2 }{\partial x^2} +\frac{\partial^2 }{\partial y^2}) u,\\
  & \frac{\partial u}{\partial \mathbf n} (x, t) = 0, \ x \in \partial \Omega,
\end{aligned}\right.
  \nonumber
\end{equation}
Nevertheless, the output of the target operator consists of
solutions at multiple fixed time steps, i.e. $S_{\tau_0, ..., \tau_n}: u_0 \mapsto \left\{u(\cdot, \tau_1), ..., u(\cdot, \tau_n)\right\}$, so that the time-dependent PDEs can
be efficiently solved by only one forward propagation. As a result, both the operator and the task can be more complicated compared with the 1D case. We take the dataset from \cite{liu2022render} and choose a subset of time discretization by fixing
$\left\{ \tau_i \right\} = \left\{ 0.2, 0.6, 1.0 \right\}$. In this dataset, $u_0$ is generated according to the Gaussian random field $\mathcal N(0, 16(\Delta+16I)^{-2})$. During the training, the bandwidth of the learnable matrix $A_l$ is also set as $4$. The results are illustrated in
Tab. \ref{tab:exp-2-1}, where the results of different SOL models are also very close, and significantly outperform the baseline models.

\begin{table}[h!]
  \caption{Evaluation of the (a) relative $L^2$ norm error ($\times 10^{-2}$) and worst error ($\times 10^{-2}$), and (b) the error on the Neumann BCs of 2D-Burgers’ equation with resolution $N
    \times N$.}\label{tab:exp-2-1}
  \centering
\begin{subtable}{.61\textwidth}
	\scriptsize
  \caption{Relative $L^2$ norm error ($\times 10^{-2}$) and worst error ($\times 10^{-2}$)}
  \begin{tabular}{ccccccc}
    \toprule 
     \multirow{2}{*}{Model}    & \multicolumn{2}{c}{$N=50$} & \multicolumn{2}{c}{$N=100$} & \multicolumn{2}{c}{$N=200$} \\
    \cmidrule(lr){2-3} \cmidrule(lr){4-5} \cmidrule(lr){6-7}
    &$L^2$ &worst &$L^2$ &worst &$L^2$ & worst \\
    \midrule
  FNO
     &$0.528$& $9.02$ &$0.589$ &$10.03$ & $0.672 $ & $9.71$ \\
  U-Net
     &$1.64$&$16.0$ &$2.31$&$ 17.6 $ &$ 2.28$&$ 17.1 $ \\
  LSM
     &$2.43$&$ 9.23 $ &$2.93$&$ 12.4 $ &$ 3.19$&$ 11.6 $ \\ \midrule 
  OPNO
     &$\mathbf{0.371}$&$\mathbf{3.37}$ &$\mathbf{0.336}$&$ \mathbf{3.68}$ & $\mathbf{0.335} $&$\mathbf{3.68}$ \\
  SPFNO
    &$\underline{0.386}$&$\underline{4.98} $ &$\underline{0.378}$&$\underline{5.00}$ & $\underline{0.378}$&$ \underline{5.05}$ \\
    \bottomrule
  \end{tabular}
\end{subtable}
 \begin{subtable}{.33\textwidth}
	\scriptsize
  \caption{Error on the BCs}
  \begin{tabular}{ccc}
    \toprule 
   $N=50$ & $N=100$ & $N=200$ \\
    \cmidrule(lr){1-1} \cmidrule(lr){2-2} \cmidrule(lr){3-3}
    $L^{\infty}_{BC}$  &$L^{\infty}_{BC}$ &$L^{\infty}_{BC}$  \\
    \midrule
  $1.6E\text{-}1$ & $3.6E\text{-}1$&$7.8E\text{-}1$ \\
  $4.2E\text{-}2$ &$1.3E\text{+}0$  & $4.2E\text{+}0$ \\
  $1.9E\text{-}1$ &$4.1E\text{-}1$  & $1.6E\text{+}0$ \\ \midrule 
  $2.9E\text{-}12$ & $2.0E\text{-}12$ &$7.9E\text{-}12$ \\
  $0$ & $0$ &$0$ \\
    \bottomrule
  \end{tabular}
\end{subtable}
\end{table}

\subsection{Experiment 3: Coupled 2D Reaction--Diffusion Equations with Neumann BCs}
The coupled reaction--diffusion (Allen Cahn) equations are formulated as follows
\begin{equation}
\left\{\begin{aligned}
  \frac{\partial u}{\partial t} &= d_u \frac{\partial^2 u}{\partial x^2} + d_u \frac{\partial^2 u}{\partial y^2} + R_u(u, v),\\
\frac{\partial v}{\partial t} &= d_v \frac{\partial^2 v}{\partial x^2} + d_v \frac{\partial^2 v}{\partial y^2} + R_v(u, v), 
\end{aligned}\right.\nonumber
\end{equation}
where
\begin{equation}
\begin{split}
  &R_u(u, v) = u - u^3 - k - v, \\
  &R_v(u, v) = u - v, \\
  &d_u = 0.001, d_v = k = 0.005.
\end{split}\nonumber
\end{equation}
The nonlinearly coupled variables $u$ and $v$ represent the activator and inhibitor in the system, respectively, to which the Neumann BCs are imposed. The dataset is provided by PDEBench
\cite{ta2022pdebench}, a comprehensive set of benchmarks for scientific machine learning. Since
the data are given on a staggered uniform grid, directly sub-sampling would yields an unexpected non-uniform grid. So we only perform the experiment with the original resolution of $128\times 128$.

In \cite{ta2022pdebench}, models are trained and evaluated using an autoregressive approach, where the output at each time step serves as the input for the subsequent time step in a time series. This approach is often applied in predicting time-dependent PDEs \cite{li2021fourier} and global
weather \cite{pathak2022fourcastnet} using neural
operators but may lead to training instability and GPU memory limitations. The results are shown in Tab. \ref{tab:exp-3}, where SPFNO acheives the lowest errors.

Furthermore, it should be noted that the difference in efficiency between SPFNO and FNO is mainly due to the constant factor arising from performing FFT on the odd/even extensions of length 2N for the DST/DCT. However, by employing optimized butterfly operations, it is possible to achieve the same efficiency as FFT.

\begin{table}[h!]
  \caption{Relative $L^2$ norm error ($\times 10^{-2}$) and worst error ($\times 10^{-2}$) of (a) 2D reaction diffusion equations in Experiment 3 and (b) 2D Darcy flow problem in Experiment 4. The
    number of parameters in neural networks (\#Param) and the averaged GPU time per epoch during the training process (\#Time) are also given.}
  \centering
\begin{subtable}{.49\textwidth}
	\scriptsize
  \caption{Reaction diffusion equations}\label{tab:exp-3}
  \begin{tabular}{ccccc}
    \toprule 
    & $L^2$ & worst & \#Param & \#Time \\
    \midrule
 FNO   & $\underline{5.19}$ &$\underline{6.37}$ &$1.3$m & $150s$ \\
 U-Net   & $68.9$ &$77.6$ & $7.8$m&$217s$ \\
 LSM   & $7.20$ &$13.7$ &$1.2$m &$606s$ \\
    SPFNO   & $\mathbf{1.13}$ &$\mathbf{1.60}$ &$1.4$m  & $275s$ \\
    \bottomrule
  \end{tabular}
\end{subtable}
 \begin{subtable}{.49\textwidth}
	\scriptsize
  \caption{Darcy flow problem}\label{tab:exp-4}
  \begin{tabular}{ccccc}
    \toprule 
    & $L^2$ & worst & \#Param & \#Time \\
    \midrule
 FNO   & $0.688$ &$6.04$ &$2.4$m & $7.44s$ \\
 U-Net   & $0.989$ &$\underline{4.81}$ & $7.8$m&$6.53s$ \\
 LSM   & $\underline{0.468}$ &$\mathbf{2.72}$ &$19.2$m &$62.7s$ \\
    SPFNO   & $\mathbf{0.283}$ &$4.89$ &$2.4$m  & $32.9s$ \\
    \bottomrule
  \end{tabular}
\end{subtable}
\end{table}


\subsection{Experiment 4: 2-D Darcy flow with Dirichlet BCs}
Darcy’s law describes the flow of fluid through a porous medium. It has been widely implemented in various fields, including hydrogeology, petroleum engineering, and soil mechanics. In this
experiment, the 2-D steady-state Darcy
flow equations in a unit box are formulated as the following boundary value problem (BVP):
\begin{equation}\label{eq:darcy}
- \nabla \cdot (a(\mathbf x) \nabla u(\mathbf x)) = f, \ \mathbf x \in [0, 1]^2.
\end{equation}
Moreover, the homogeneous Dirichlet BCs are imposed. The task is to learn the operator $\mathcal G(a) = u$ that maps the diffusion coefficient $a$ to the solution $u$, where the input $a$ is possibly discontinuous. This problem serves as another most
commonly used benchmark for deep PDE solvers since the dataset is provided in \cite{li2021fourier}. In this dataset, the diffusion coefficient $a(x)$ is taken as a piecewise constant, while the
reference solution is generated using a finite difference method. Under this premise, however, the nondifferentiable
variable coefficient makes the 2nd order finite difference method unsuitable for solving the problem. So we utilize the 2D-Darcy dataset of PDEBench \cite{ta2022pdebench} with $f$ fixed as $100.0$ 
instead. Its input functions are also piecewise constant and its larger size ($10^4$ pieces of data compared to
$10^3$ in \cite{li2021fourier})
contributes to the model achieving higher accuracy. The experiment is conducted using the original spatial resolution $128 \times 128$.

Compared to the Neumann BC, the Dirichlet BC is much easier to learn because it does not involve any derivative. Additionally, the heterogeneity between the input and output functions leads to a much more
complicated spectral structure of the mapping operator. Actually, multiple non-neural-operator method have been reported to surpass the performance of neural operators in solving Eq. (\ref{eq:darcy}), especially the transformers,
among which the LSM acheives the highest accuracy that is known to us.

We adopt the ReduceLROnPlateau scheduler for SPFNO to accelerate the training. Similar scheduler is employed for baseline models if there is an improvement in accuracy. The results can be found in Tab. \ref{tab:exp-4}, where the
SPFNO again obtains the lowest relative error. Notably, however, the LSM achieves the lowest maximum error. The main reason is that the input $a(x)$ in the dataset is discontinuous, and when the
discontinuous input exhibits rapid variations, the accuracy of spectral methods will significantly decrease. Similarly, the worst performances of FNO and SPFNO are slightly inferior to
that of U-Net. It might help address such issues to introduce local structures such as convolution layers or radial basis functions to capture the extremely high frequency information, but could potentially
lead to a loss of flexibility or invariance in discretization \cite{xiong2023koopman}.

\subsection{Experiment 5: 2D Navier-Stokes equation with mixed BCs}\label{sec:exam5}
We focus on the ability and flexibility of handling general geometries of our model. The incompressible flow through a pipe is considered in this experiment, of which the governing equation is incompressible Navier-Stokes equation
\begin{equation}
\left\{\begin{aligned}
  & \frac{\partial \mathbf u}{\partial t} + (\mathbf u \cdot \nabla)\mathbf u = -\nabla p + \mu \nabla^2 \mathbf u, \\
  & \nabla \cdot \mathbf u = 0,
\end{aligned}\right.\nonumber
\end{equation}
where $u$ is the velocity vector, $p$ is the pressure, and  $\mu$ represents the viscosity. The Pipe dataset is given by the geometry-aware Fourier neural operator (Geo-FNO) \cite{li2022fourier}, where the shape of the pipe is randomly
generated, and the maximum velocity condition $u_{max} = [1; 0]$ is imposed at the inlet, free boundary condition is imposed
at the outlet, and no-slip boundary condition is imposed at the inner wall of the pipe. However, we have identified a few instances with artificial discontinuities at the upper edge of the outlet in
the current Pipe dataset, see Fig. \ref{fig:new-pipe}a. These discontinuities arise from bugs in the numerical solver when handling the free boundary condition. So we recomputed the reference solutions to address this
issue. The updated dataset is now available at \url{https://drive.google.com/drive/folders/1WPAs6bXttCPOWrDaUudC8B4dKPoju1OO}. Notably, after excluding the errors caused by the dataset, the accuracy of all models has improved on the updated dataset.

\begin{figure}[htb]
  \begin{center}
    \subfloat[][Updated pipe dataset]{\includegraphics[width=.49\linewidth]{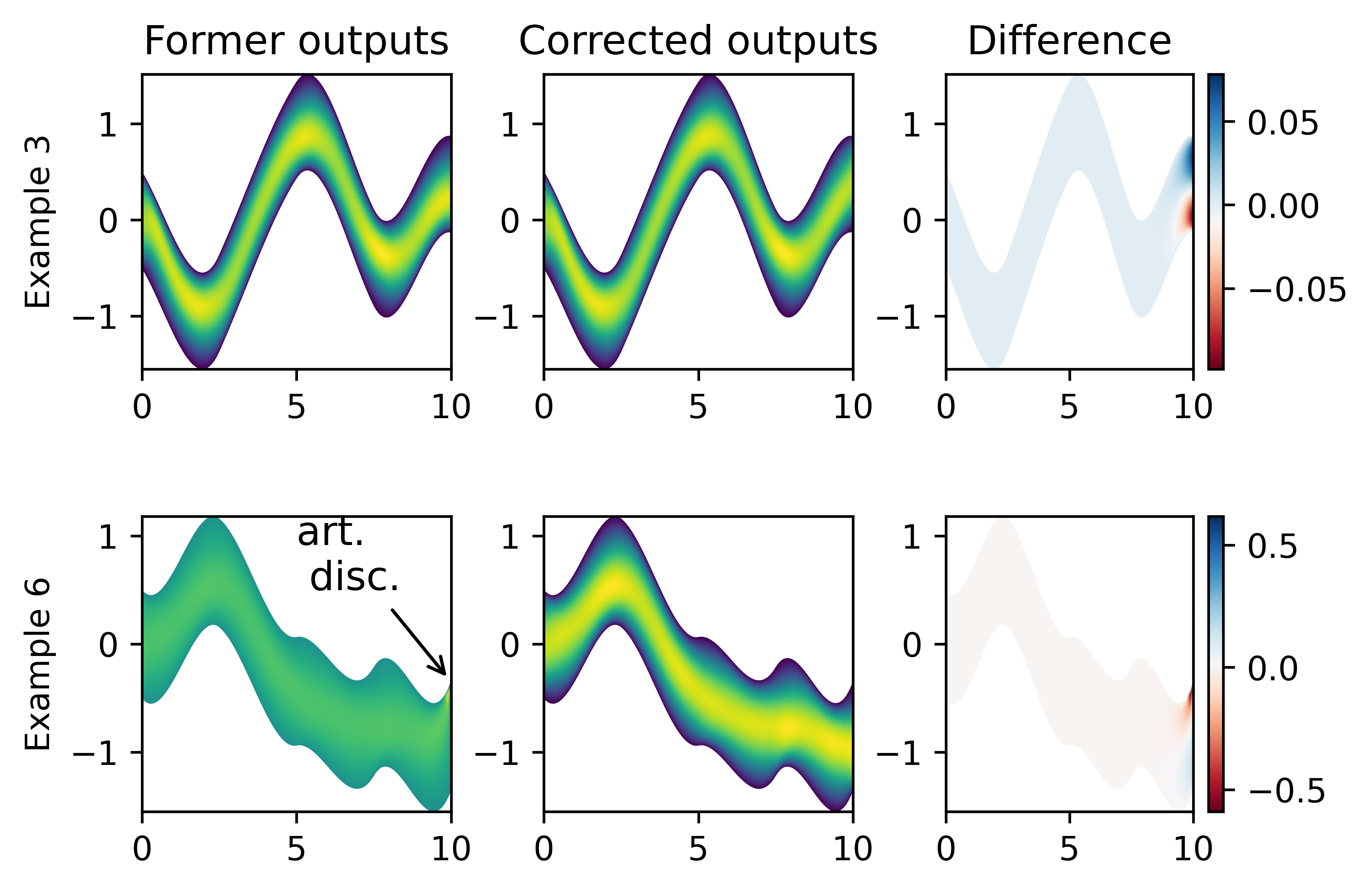}}
    \subfloat[][Dual Fourier extension]{\includegraphics[width=.49\linewidth]{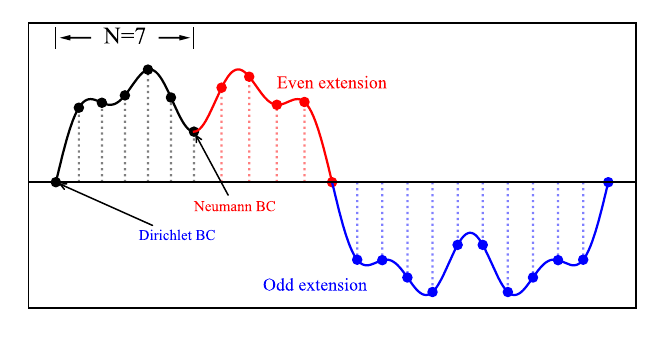}}
  \end{center}
   \caption{(a) Examples of the differences between updated pipe dataset and the previous version. (b) Diagram of the dual Fourier extension.}\label{fig:new-pipe}
\end{figure}

In the experiment, we utilized $1000$ instances for training and $1000$ instances for test. The goal is to learn the operator that maps the mesh point locations to the horizonal fluid velocity on
these mesh points. The experiment is conducted with a resolution of $129 \times 129$.

The solution to the problem satisfies a fixed Dirichlet BC on the inlet and a Neumann BC on the outlet. To approximate the mixed BCs in the horizontal direction, the Fourier transform is applied on the \textit{dual Fourier extensions}, as shown in Fig. \ref{fig:new-pipe}b. When a continuous function $f \in C[0, 1]$ satisfies $f(0)=0$ and $f'(1)=0$, the Fourier expansion of its odd-even dual Fourier
extension consists solely of odd-order sine series terms. The corresponding transform is also known as the ``WAWS'' transform \cite{wise2021pseudospectral}, which is an abbreviation for the Whole-sample Antisymmetry on the left boundary
and Whole-sample Symmetry on the right boundary. On the vertical direction, we use SPFNO with sine basis to impose the Dirichlet BCs. Finally, in this experiment, we implement a WAWS transformation in
the horizontal direction for the mixed BCs, and a sine transform in the vertical direction for Dirichlet BCs.

In addition to the mesh deviating from the expected uniform grid in the design of SPFNO, the complexity of the problem is further compounded by the non-perpendicular alignment of the pipe's artificial
outlet with the longitudinal axis. This could lead to errors in the Neumann BC we imposed on the mesh at the outlet. To improve the flexibility of our model when dealing with general geometries, the
projection filter is dropped. As shown in Tab. \ref{tab:exp-5}, the SPFNO models has a significant lower error compared to the baseline models. Besides, the projection filter only exhibits
minimal impact on the performance. These results demonstrate the flexibility of SPFNO in handling general geometries.

\begin{table}[h!]
\captionsetup{skip=1pt}
\caption{Evaluation relative $L^2$ norm error ($\times 10^{-3}$) and worst error ($\times 10^{-3}$) for the pipe flow.}\label{tab:exp-5}
\begin{center}
  \scriptsize
\begin{tabular}{@{}ccccc@{}}
\toprule
                                                          & $L^2$ error    & worst         & \#Param & \#Time(s) \\ \midrule
Geo-FNO                                                   & 1.78           & 6.31          & 2.4m    & 2.6       \\
U-Net                                                     & 4.68           & 9.35          & 7.8m    & 1.4       \\
LSM                                                       & \underline{1.69}     & \underline{4.08}    & 10.8m   & 6.7       \\ \midrule
SPFNO                                                     & \textbf{0.980} & \textbf{3.56} & 7.2m    & 5.8       \\
SPFNO(with PF) & 0.999          & 3.83          & 7.2m    & 6.0         \\ \bottomrule
\end{tabular}
  
\end{center}
\end{table}

\subsection{Additional experiements}\label{sec:addi-exp}

\subsubsection{Evaluation and comparison on the dataset of BOON}
We additionally compare our model with the BOON model on the Dirichlet and Neumann datasets that the latter provided. Similar parameters are chosen as those for BOON in \cite{saad2023guiding} (also
see Tab. \ref{tab:boon-para}), and the bandwidth
$b$ is fixed as 1. Please note that one complex Fourier mode consists of two basis functions.

\begin{table}[h!]
 \captionsetup{skip=0pt}
  \caption{Parameters of the SPFNO model for the training process on the dataset of BOON}\label{tab:boon-para}
  \begin{center}
 \scriptsize
\begin{tabular}{@{}cccc@{}}
\toprule
Problem and the dimension        & Modes & Channels & \#Param \\ \midrule
1D Burgers' equation (1D)        & 32    & 64    & 0.55m   \\
1D heat equation (1D space+time) & 24    & 32    & 0.11m   \\
2D wave equation (2D space+time) & 16    & 20    & 0.43m   \\ \bottomrule
\end{tabular}
\end{center}
\end{table}

To accelerate the training process, a ReduceLROnPlateau scheduler is adopted as in
Experiment 4 for the SPFNO model.

\textit{(a) 1D Burgers' equation with Dirichlet BCs}: In this dataset, the following Riemann problem of viscous Burgers' equation with Dirichlet BCs is consider:
\begin{equation}
\begin{split}
  &u_t + (u^2/2)_x = \nu u_{xx}, \ x \in [0, 1], t \geq 0,\\
  &u_0(x) = \left\{\begin{aligned}
                    u_L, \ \textrm{if} \ x \leq 0.5,\\
                     u_R, \ \textrm{if} \ x > 0.5, 
                   \end{aligned}\right.\\
  &u(0, t) = u_{\textrm{exact}}(0, t), \ u(1, t) = u_{\textrm{exact}}(1, t), \ t > 0.
\end{split}  \nonumber
\end{equation}
The task is to learn the solution operator $\mathcal S: u_0(x) \to u_{t=1.2}$.

\textit{(b) 1D heat transfer equation with Neumann BCs}: In this dataset, the following 1D heat equation with time-dependent Neumann BCs is consider:
\begin{equation}
\begin{split}
  & u_{t} - k u_{xx} = f(x, t), \ x \in [0, 1], \ t \geq 0,  \\  
  & u_0(x) = \cos (\omega\pi x), \ x \in [0, 1], \\
  & u_x(0, t) = 0, \ u_x(1, t) = U \sin \pi t, \ t \geq 0,
\end{split}\nonumber
\end{equation}
The task is to learn the solution operator $\mathcal S: u_0(x) \to \left\{ u(x, t=t_M) \right\}$ where $M = 25$. 

\textit{(c) 2D wave equation with Neumann BCs}: In this dataset, the following 2D wave equation with Neumann BCs is consider:
\begin{equation}
  u_{tt} = c^2(u_{xx} + u_{yy}), \ x, y \in [0, 1]^2, \ t \geq 0,    \nonumber
\end{equation}
to which the following analytical solution exists
$$ u_{\textrm{exact}}(x, t) = k \cos(\pi x) \cos(\pi y) \cos(c \sqrt{2} \pi t). $$
The task is to learn the solution operator $\mathcal S: u_0(x, y) \to \left\{ u(x, y, t=t_M) \right\}$ where $M = 25$.

\begin{table}[h!]
 \captionsetup{skip=0pt}
  \caption{Performance of SPFNO and BOON models on multiple BOON datasets. The relative $L^2$ test error (and error on BCs within the parentheses) are given. The averaged GPU time per epoch in the
    training process is given within square brackets or seperately.}\label{tab:boon-perform}
  \setlength{\tabcolsep}{2pt}
\begin{center}
 \scriptsize
 \begin{subtable}{1\textwidth}
 \begin{center}
  \caption{Single-step predictions for 1D Burgers’ equation with Dirichlet BCs and varying viscosities $\nu$. $N=500$. }\label{tab:exp-6-a}
  \begin{tabular}{ccccccc}
    \toprule 
   Model & $\nu=0.1$ & $\nu=0.05$ & $\nu=0.02$ & $\nu=0.005$ & $\nu=0.002$ & \#Time\\
    \midrule
    SPFNO& $\mathbf{1.12e-5}(0)$ &$\mathbf{1.26e-5}(0)$ &$\mathbf{5.95e-5}(0)$  & $5.04e-4(0)$ & $7.28e-4(0)$ & $0.12s$\\
    BOON-FNO \cite{saad2023guiding}& $1.2e-4(0)$ &$1.0e-4(0)$ &$8.4e-5(0)$  & $\mathbf{1.0e-4}(0)$ & $1.27e-3(0)$ & $1.3s$\\
    BOON-MWT \cite{saad2023guiding}& $2.0e-4(0)$ &$2.5e-4(0)$ &$2.2e-4(0)$  & $2.0e-4(0)$ & $\mathbf{3.4e-4}(0)$ & $--$\\
    \bottomrule
  \end{tabular}
\end{center}
\end{subtable}\vspace{10pt}
\begin{subtable}{1\textwidth}
\begin{center}
  \caption{Multi-step predictions for 1D heat equation with Neumann BCs and varying resolutions N. $M = 25$. }\label{tab:exp-6-b}
  \begin{tabular}{ccccc}
    \toprule 
    Model& $N=256$ & $N=512$ & $N=1024$ & $N=2048$ \\
    \midrule
    SPFNO&$\mathbf{5.39e-4}(0) [0.51s]$ &$\mathbf{3.53e-4}(0) [0.90s]$  & $\mathbf{2.63e-4}(0) [0.96s]$ & $\mathbf{2.90e-4}(0) [1.13s]$ \\
    BOON  &$3.18e-2(0) [3.5s]$ &$3.67e-2(0) [5.9s]$ & $4.24e-2(0) [10.4s]$ & $6.76e-3(0) [20.0s]$ \\
    \bottomrule
  \end{tabular}
\end{center}
\end{subtable}\vspace{10pt}
\begin{subtable}{.99\textwidth}
\begin{center}
  \caption{Multi-step predictions for 2D wave equation with Neumann BCs and varying resolutions N. $M = 25$.}\label{tab:exp-6-c}
  \begin{tabular}{cccc}
    \toprule 
    Model& $N=25$ & $N=50$ & $N=100$ \\
    \midrule
    SPFNO &$\mathbf{1.14e-4}$(0)  & $\mathbf{4.69e-5}$(0) & $\mathbf{1.93e-4}$(0) \\
    BOON \cite{saad2023guiding} &$9.7e-4$(0) & $8.93e-4$(0) & $9.6e-4$(0) \\
    \bottomrule
  \end{tabular}
\end{center}
\end{subtable}
\end{center}
\end{table}

The results are illstrated in Tab. \ref{tab:boon-perform}, where the SPFNOs outperform BOONs in almost all of the cases, and we have observed that continuing the training process allows the SPFNO
surpassing the baseline in all scenarios. However, this process would not be carried out, and the reasons will be given in the subsequent text. Due to the adoption of a global spectral method in SPFNO,
in contrast to BOON, which empolys a local finite difference method for the approximation of BCs, the SPFNO achieves higher accuracy
globally by introducing the physical information from boundaries into the interior system in a direct way.

Furthermore, the SPFNO also demonstrates approximately an order of magnitude higher efficiency compared to BOON due to its utilization of bases that inherently satisfy the BCs, while BOON requires an additional BC
correction operation in each spectral layer. 

It is worth noting that, however, we suggest
that the primary factor contributing to the extremely high accuracy of our models in additional experiments is the dataset. The problems considered here have analytical solutions, and the generator of input
functions involves only a small
number of degrees of freedom. This circumstance enables the model to easily fit a manifold of significantly reduced dimensionality while learning the solution operator. Hence, although the existing results
sufficiently illustrate the approximation capability of all tested model for the problems, we are afraid that the practical implications of further error reduction remain limited.

\subsubsection{Invariance to discretization: the out-of-domain evaluations of the pre-trained model on different grids}\label{sec:to-fine}

We take the pre-trained model in Experiment 1 as an example. As a consequence of the spectral structure of the SOL architecture, the model trained on the coarse grid ($N=256$) can directly predict on the fine grid without significant losses in accuracy, see Tab \ref{tab:eval-fine} and
Fig. \ref{fig:eval-fine}. The data from the
sub-scaled grid remains untouched during the training process. A similar situation is also observed for the model trained on the fine grid ($N=4096$) but evaluated on coarse ones.

\begin{table}[h!]
  \caption{The evaluation relative $L^2$ errors (errors on BCs) of SPFNO model for 1D Burgers' equation. Model is trained on a $N$ grid but evaluated on a grid with resolution $N'$} \label{tab:eval-fine}
\begin{center}
\scriptsize
  \begin{tabular}{cccccc}
    \toprule 
    & $N'=256$ & $N'=512$ & $N'=1024$ & $N'=2048$ & $N'=4096$ \\
    \midrule
    $N=256$ &$0.0086766(0)$  & $0.0086748(0)$ & $0.0086737({0})$ &$0.0086732(0)$ &$0.0086729(0)$ \\
    $N=4096$ &$0.0087394(0)$  & $0.0087364({0})$ & $0.0087351({0})$ &$0.0087344({0})$ &$0.0087341({0})$ \\
    \bottomrule
  \end{tabular}
\end{center}
\end{table}

\begin{figure}[tbhp]
  \begin{center}
    \subfloat[][$N'=256$]{\includegraphics[width=.19\linewidth]{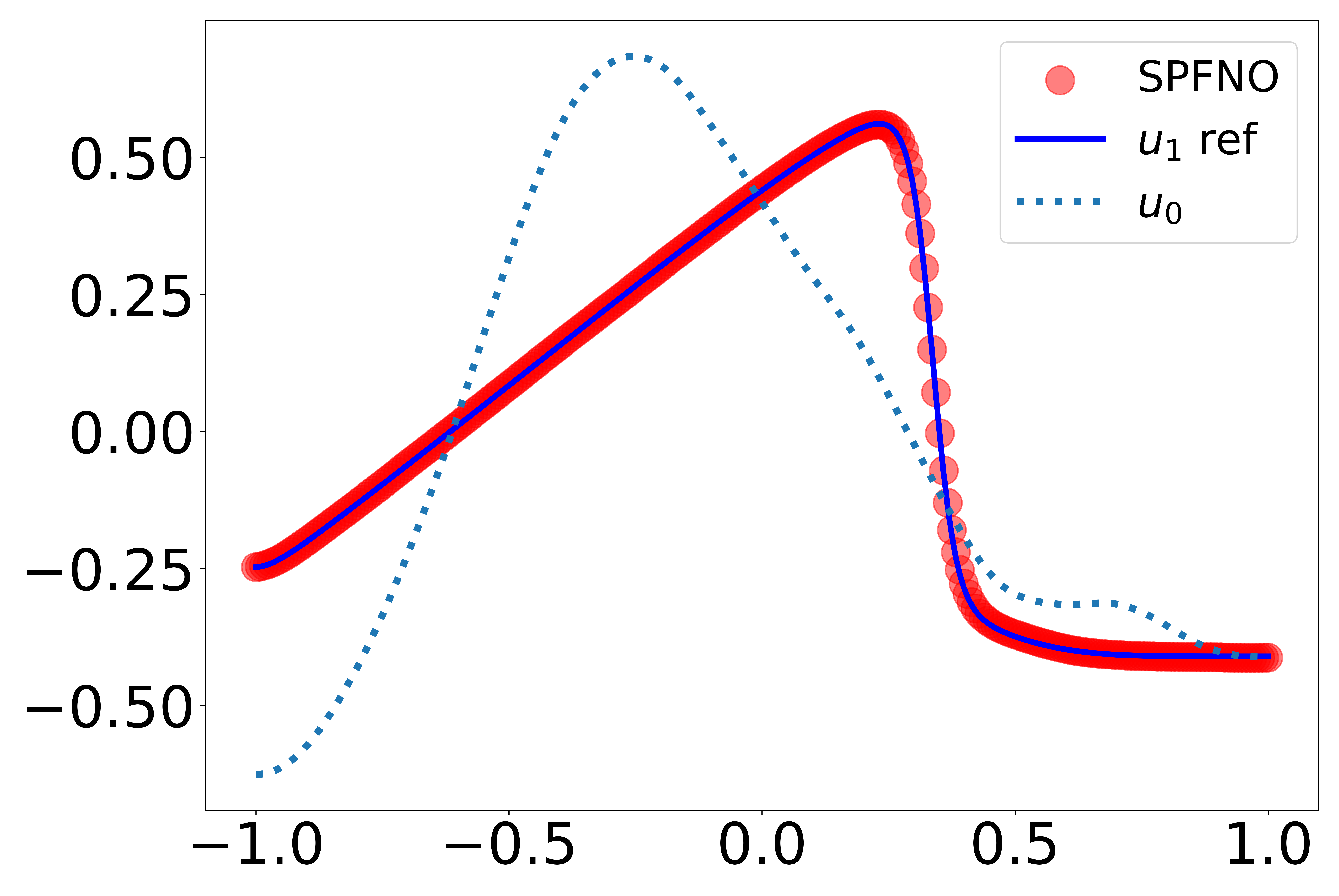}}
    \subfloat[][$N'=512$]{\includegraphics[width=.19\linewidth]{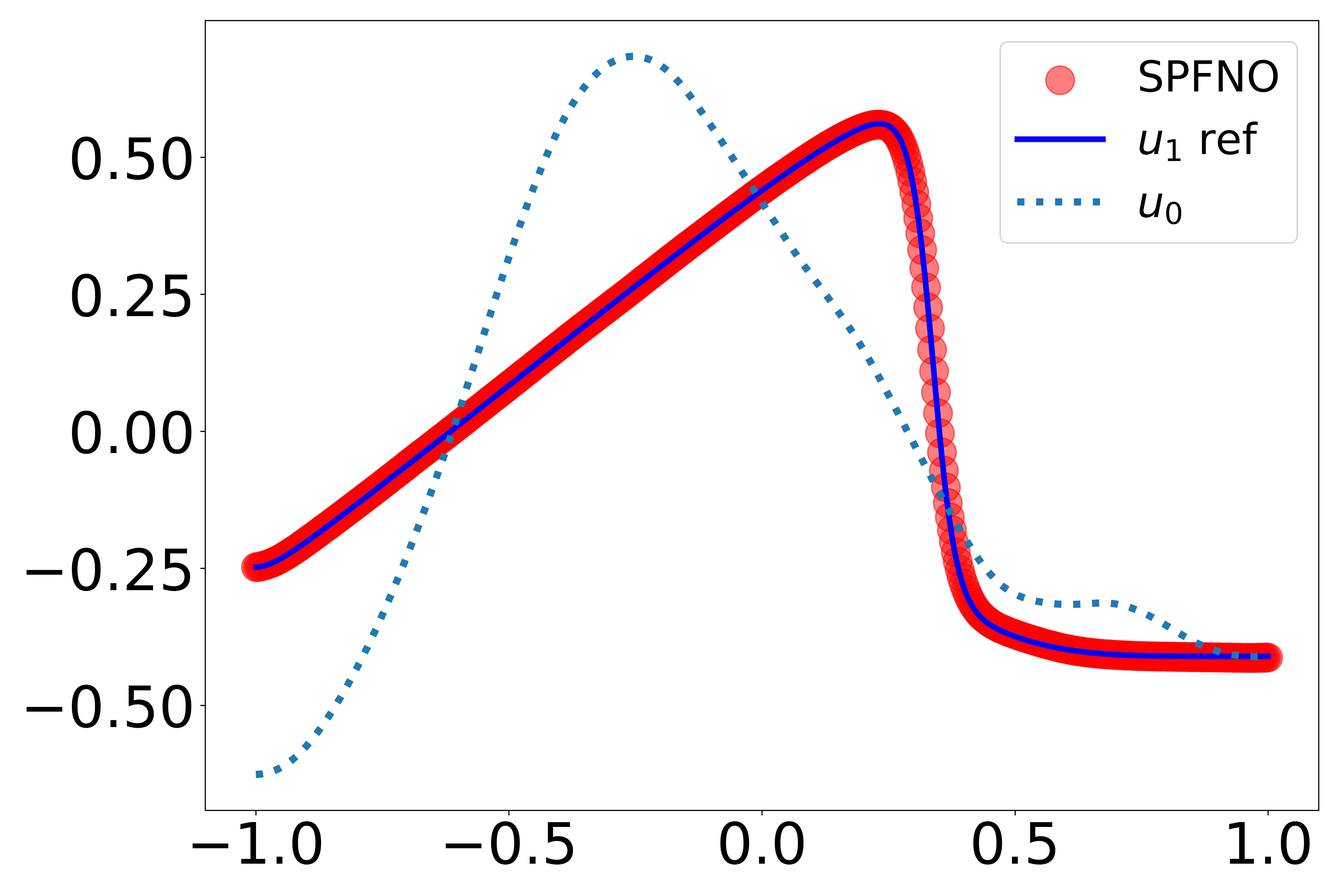}}
    \subfloat[][$N'=1024$]{\includegraphics[width=.19\linewidth]{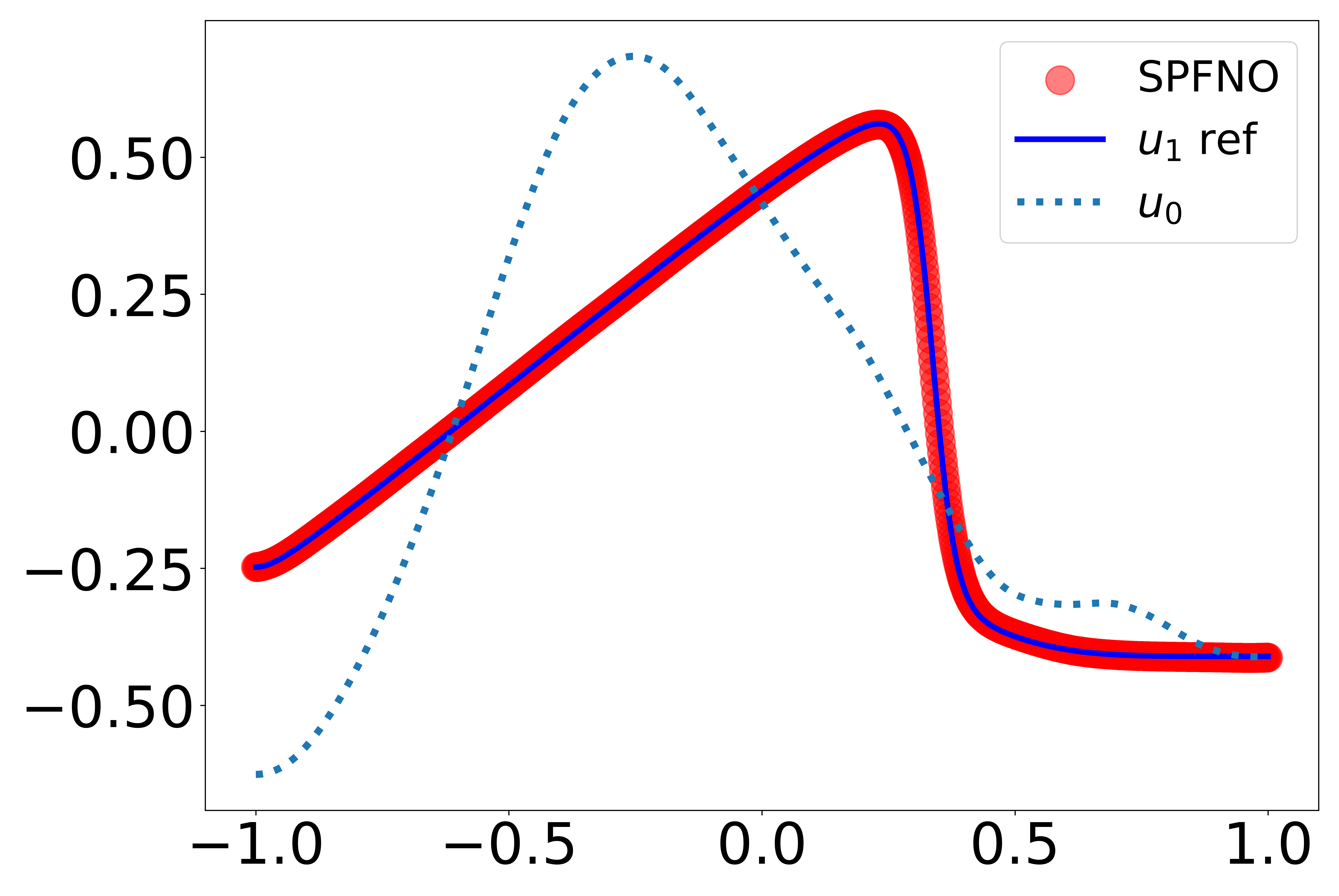}}
    \subfloat[][$N'=2048$]{\includegraphics[width=.19\linewidth]{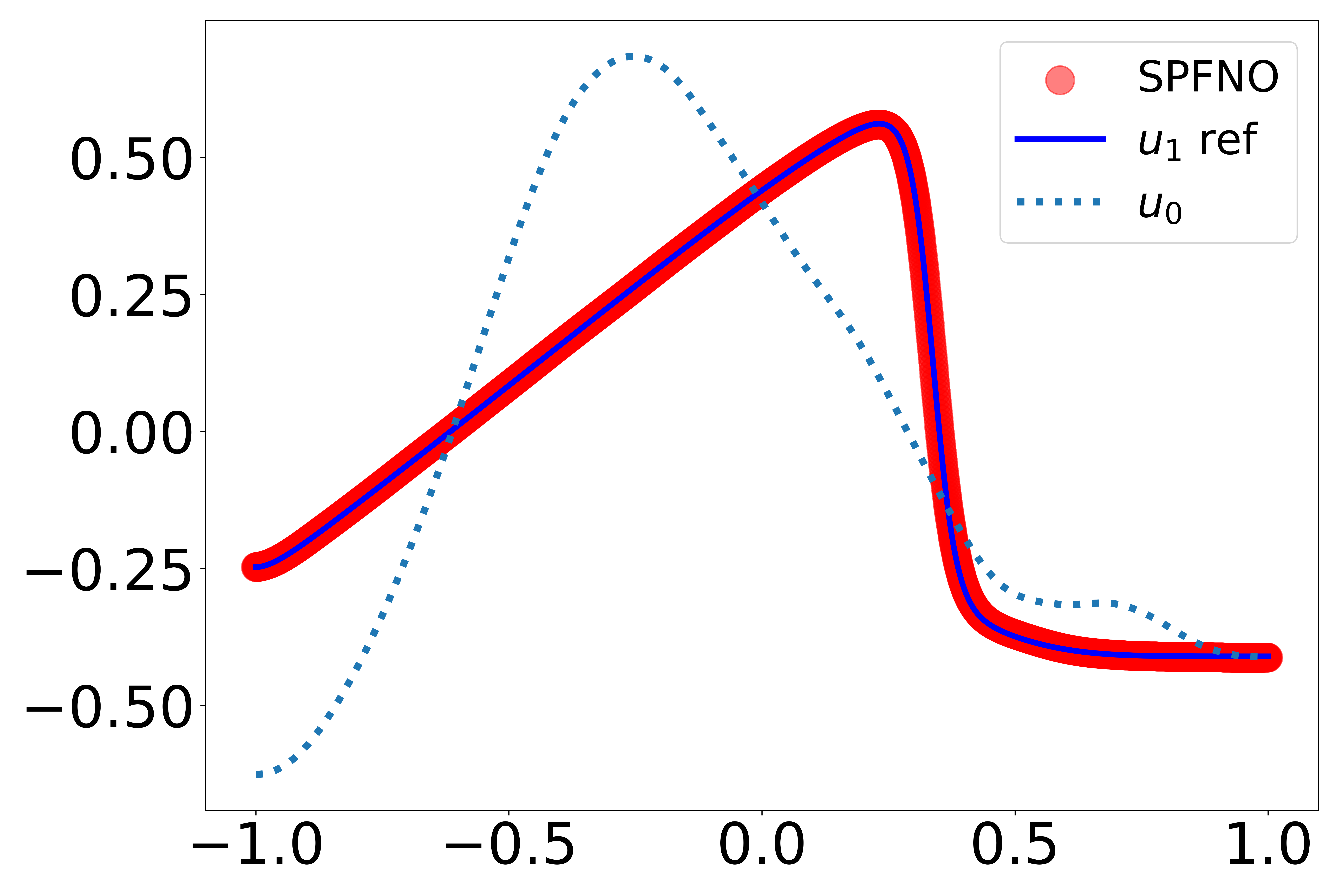}}
    \subfloat[][$N'=4096$]{\includegraphics[width=.19\linewidth]{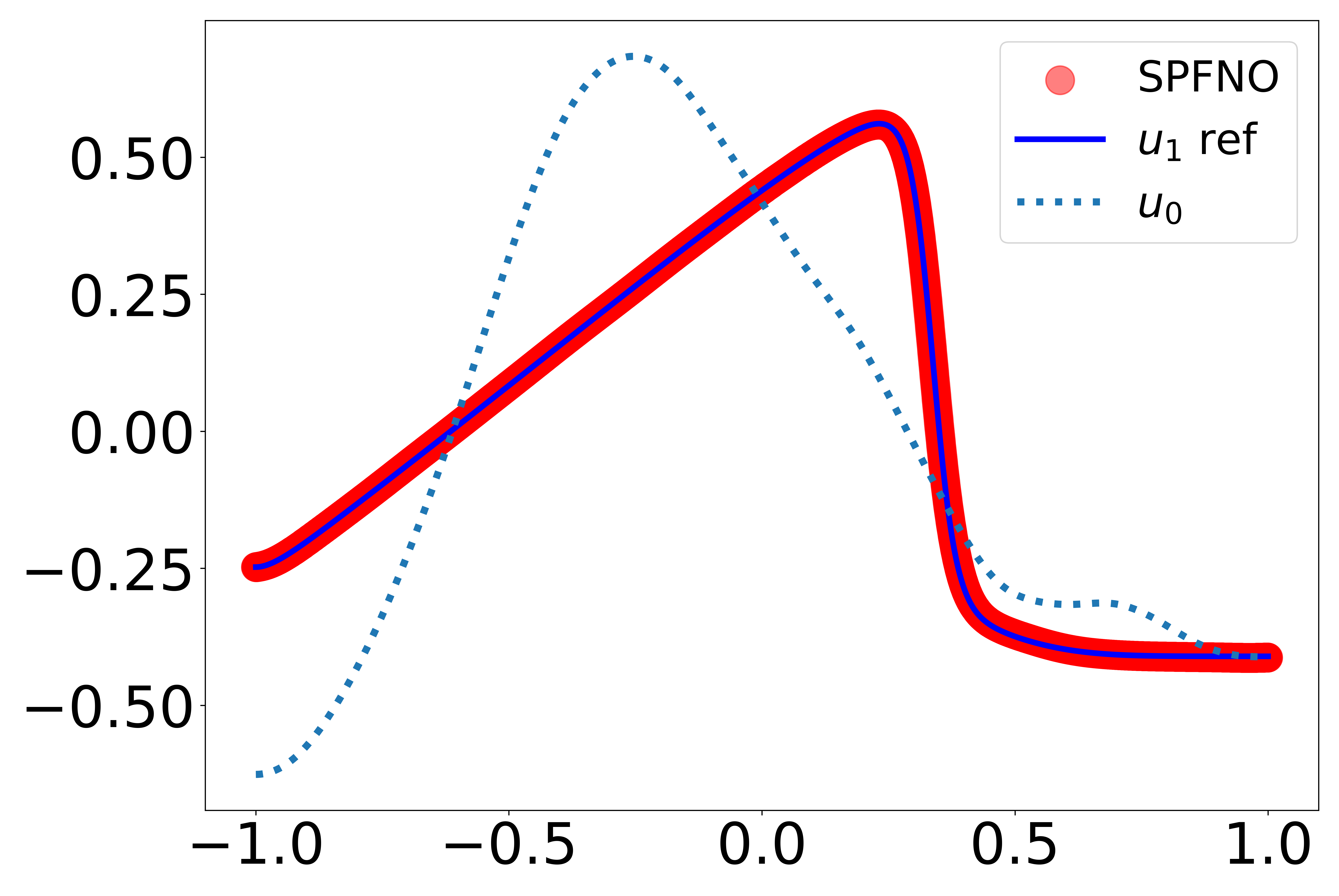}}
  \end{center}
    \caption{The prediction of SPFNO model that is trained on a $N=256$ grid but evaluated on different grids of resolution $N'$}\label{fig:eval-fine}
  \end{figure}

\section{Discussion and future work}
We presented a novel spectral operator learning (SOL) architecture for PDEs with Dirichlet and Neumann BCs. This method, named SPFNO, combines traditional spectral methods and the neural
operator architecture, so that it satisfies the corresponding BCs exactly. The BC-satisfying properties were proved both theoretically and numerically. Numerical experiments also showed that the SOL methods yield very close results regardless of the different types of grids they are associated
with. On the other hand, compared with baselines including non-BC-satisfying models and BC-satisfying BOON model, state-of-the-art performance in solving a variety of widely adopted benchmark problems
can be achieved with our proposed framework. From the perspective of machine learning, the BC-satisfying spectral structure
is an intuitive bias that effectively shrinks the hypothesis space.

Although the paper focuses on the data-driven training of neural operators, we can also directly learn the target operator by utilizing the physics constraints and minimizing the residuals of
equations, which can dramatically reduce the
dependence on datasets. Readers may find that the residuals of the BCs that are usually difficult to handle will vanish for an SOL model, making the training much easier. Further research holds promise for future exploration. And while the bandwidth of the
learnable matrix is generally considered to be determined by the orthogonality of the basis, a suitable increase of it improves the approximation capability of SPFNO, for which the theoretical analysis is an
interesting topic for further research. In addition, developing other SOL instances for more complex BCs and
geometries, e.g., the radiation BCs or unbounded domains, is an important subject of future research.

\vspace{12pt}

\section*{Declaration of generative AI and AI-assisted technologies in the writing process}

During the preparation of this work the author(s) used ERNIE Bot in order to improve language and readability. After using this tool/service, the author(s) reviewed and edited the content as needed and take(s) full responsibility for the content of the publication.

\section*{Acknowledgement}
This work was supported by the National Natural Science Foundation of China (Grant
Nos. 12271523, 11901577, 11971481, 12071481), by the Defense Science Foundation of
China (Grant No. 2021-JCJQ-JJ-0538), by the National Key R\&D Program of China (Grant
No. SQ2020YFA0709803), by the Science and Technology Innovation Program of Hunan
Province (Grant Nos. 2022RC1192, 2021RC3082), and by the Research Fund of National
University of Defense Technology (Grant Nos. ZK19-37, ZZKY-JJ-21-01).


\bibliographystyle{elsarticle-num}
\bibliography{iclr2024_conference}








\end{document}